\newtheorem{thm}{Theorem}
\newtheorem{lemma}[thm]{Lemma}
\newtheorem{prop}[thm]{Proposition}
\newtheorem{conjecture}[thm]{Conjecture}
\newtheorem{question}{Question}
\newtheorem{claim}[thm]{Claim}
\newtheorem{case}{Case }[thm]
\newtheorem{subcase}{Subcase}[case]
\def\subsubsection{\@startsection{subsubsection}{3}%
  \z@{.5\linespacing\@plus.7\linespacing}{.1\linespacing}%
  {\normalfont\itshape}}
\begin{document}

\author{Rebekah Herrman}
\address[Rebekah Herrman]{Department of Mathematical Sciences, The University of Memphis, Memphis, TN}
\email[Rebekah Herrman]{rherrman@memphis.edu}

\author{Peter van Hintum}
\address[Peter van Hintum]{Department of Pure Maths and Mathematical Statistics, University of Cambridge, UK}
\email[Peter van Hintum]{pllv2@cam.ac.uk}

\title[$(t,r)$ broadcast domination in the infinite grid] 
{$(t,r)$ broadcast domination in the infinite grid}

\linespread{1.3}
\pagestyle{plain}

\begin{abstract}
The $(t,r)$ broadcast domination number of a graph $G$, $\gamma_{t,r}(G)$, is a generalization of the domination number of a graph. $\gamma_{t,r}(G)$ is the minimal number of towers needed, placed on vertices of $G$, each transmitting a signal of strength $t$ which decays linearly, such that every vertex receives a total amount of at least $r$ signal. In this paper we prove a conjecture by Drews, Harris, and Randolph \cite{drews2019} about the minimal density of towers in $\mathbb{Z}^2$ that provide a $(t,3)$ domination broadcast for $t>17$ and explore generalizations. Additionally, we determine the $(t,r)$ broadcast domination number of powers of paths, $P_n^{(k)}$
and powers of cycles, $C_n^{(k)}$.
\end{abstract}
\maketitle 

\section{Introduction}

Let $G= (V(G), E(G))$ be a graph with vertices $V(G)$ and edges $E(G)$. The \emph{domination number} of a graph $G$ is the cardinality of the smallest dominating set of the graph, which is the smallest set $S$ such that every vertex in $V(G)\setminus S$ is adjacent to a vertex of $S$.

In 2014, Blessing, Insko, Johnson, and Mauretour generalized this notion to $(t,r)$ \emph{broadcast domination} \cite{blessing2015}. In broadcast domination, there is a collection of vertices called towers, $\mathcal{T}$, that transmit a signal $t\in\mathbb{N}$ in the following manner. If $u \in \mathcal{T}$, and $v \in G$, then the signal at $v$ from $u$ is denoted $f_u(v)$ and is $f_u(v) = max \{0, t-d(u,v)\}$, where $d(u,v)$ is the distance between $u$ and $v$. The set $\mathcal{T}$ is said to be \emph{$(t,r)$ broadcast dominating} if each tower transmits a signal $t$ and for all $ v \in G$, $\Sigma_{u \in \mathcal{T}} f_u(v) \geq r$. The $(t,r)$ broadcast domination number of $G$, $\gamma_{t,r}(G)$, is the minimum cardinality of a $(t,r)$ broadcasting set  $\mathcal{T}$.  

\par
The $(t,r)$ broadcasting domination number has been studied for two-dimensional grids, paths, triangular grids, matchstick graphs, and $n$-dimensional grids \cite{blessing2015, crepeau2019, drews2019, harris2018, shlomi2019}. Asymptotic bounds of the $(t,2)$ broadcast domination number on finite grids has been studied \cite{randolph2018} as well.  

To describe the $(t,r)$ broadcast domination number of $\mathbb{Z}^2$, we consider the \emph{density} of a set $\mathcal{T}\subset \mathbb{Z}^2$ defined as $\limsup_{n\to \infty} \frac{|\mathcal{T}\cap [-n,n]^2|}{(2n+1)^2}$. Accordingly, $\delta_{t,r}(\mathbb{Z}^2)$ is the minimal density of a $(t,r)$ broadcasting set in $\mathbb{Z}^2$. In 2019, Drews, Harris, and Randolph \cite{drews2019} showed that $\delta_{t,3}(\mathbb{Z}^2) \leq \delta_{t-1,1}(\mathbb{Z}^2) = \frac{1}{2t^2-2t+1}$ for grid graphs $\mathbb{Z}^2$ and conjectured $\delta_{t,3}(\mathbb{Z}^2) = \delta_{t-1,1}(\mathbb{Z}^2)$ for $t>2$. We prove this conjecture for $t>17$.

\begin{thm}\label{densityof3broadcast}
For $t> 17$, $ \delta_{t,3}(\mathbb{Z}^2) = \delta_{t-1,1}(\mathbb{Z}^2)$
\end{thm}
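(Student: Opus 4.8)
Since Drews, Harris, and Randolph have shown $\delta_{t,3}(\mathbb{Z}^2)\le\delta_{t-1,1}(\mathbb{Z}^2)$, it is enough to prove the reverse inequality $\delta_{t,3}(\mathbb{Z}^2)\ge\delta_{t-1,1}(\mathbb{Z}^2)$; write $D:=\delta_{t-1,1}(\mathbb{Z}^2)^{-1}$. For $\rho\ge 0$ let $B_\rho$ and $S_\rho$ be the ball and sphere of radius $\rho$ about the origin in the graph metric of $\mathbb{Z}^2$, so that $|B_\rho|=2\rho^2+2\rho+1$ and $|S_\rho|=4\rho$ for $\rho\ge 1$. The plan is to show that every $(t,3)$-broadcasting set $\mathcal{T}\subseteq\mathbb{Z}^2$ has density at least $1/D$. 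After averaging over a large box $[-N,N]^2$ (the boundary contributes only $o(N^2)$, or one may pass to a large torus), this reduces to a discharging statement: give every vertex one unit of charge, move all the charge onto the towers, and prove that no tower ends up with more than $D$ units.

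\textbf{The rule.} Call a vertex $v$ \emph{self-served} if some tower lies at distance $\le t-3$ of it, so that this tower alone already sends $v$ signal $\ge 3$. A self-served vertex ships its whole unit to one nearest tower, ties broken by a fixed ordering of $\mathbb{Z}^2$. If $v$ is not self-served, then every tower sits at distance $\ge t-2$ from $v$, hence every tower that reaches $v$ lies at distance $t-2$ or $t-1$ and contributes signal $2$ or $1$; such a $v$ splits its unit among the towers reaching it in proportion to the signal each delivers, so a tower at distance $t-2$ collects at most $\tfrac23$ of that unit and one at distance $t-1$ at most $\tfrac13$. Charge is conserved, since each vertex sends out exactly one unit in total.

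\textbf{The count and the main obstacle.} Fix a tower $u$. It receives a whole unit only from the vertices within distance $t-3$ for which it is the chosen nearest tower, at most $|B_{t-3}|$ of them; it receives a fraction only from the $|S_{t-2}|=4(t-2)$ vertices at distance exactly $t-2$ (at most $\tfrac23$ each) and the $|S_{t-1}|=4(t-1)$ vertices at distance exactly $t-1$ (at most $\tfrac13$ each). Hence $u$ gets at most $|B_{t-3}|+\tfrac23\cdot4(t-2)+\tfrac13\cdot4(t-1)$, and a short computation shows this equals $D+\tfrac43$. So the crude bound overshoots the target by exactly $\tfrac43$, and the heart of the proof is to recover this constant. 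The leverage is a tension between the two ways $u$ gathers charge: to collect nearly $|B_{t-3}|$ whole units, $u$ must be the unique nearest tower to most of its radius-$(t-3)$ diamond, hence rather isolated; but a vertex at distance $t-1$ from $u$ can only extract close to $\tfrac13$ if its total signal is close to $3$, which forces a ``helper'' tower (supplying the missing unit of signal) into a thin annulus near $u$, and each helper both diverts charge of its own and keeps a definite number of nearby distance-$(t-2)$ and distance-$(t-1)$ vertices from extracting their maximal share. Making this quantitative --- bounding how many such helpers fit in the annulus around $u$, and how much charge is thereby rerouted --- is the main work, and it is here that $t>17$ should be needed: the required inequalities pit the quadratic term $2(t-3)^2$ against linear correction terms and only become favourable once $t$ is large. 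I expect the bookkeeping to be organised as a case analysis on the configuration of towers within distance $O(t)$ of $u$, and this case analysis to be the principal technical hurdle.
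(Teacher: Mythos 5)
Your setup is sound and, up to a factor of $3$ (charge versus signal), it is the same accounting the paper uses: your observation that the naive per-tower bound $|B_{t-3}|+\tfrac23\cdot4(t-2)+\tfrac13\cdot4(t-1)$ overshoots $D=2t^2-6t+5$ by exactly $\tfrac43$ corresponds precisely to the paper's observation that each tower's capped transmission exceeds $3D$ by exactly $4$ units of signal, so that one must exhibit $4$ units of ``excess'' per tower. But the proposal stops exactly where the proof begins. The entire mathematical content of the theorem is the recovery of that constant, and you explicitly defer it (``Making this quantitative \dots is the main work''). In the paper this is Lemma \ref{excesslem}, a genuine multi-page case analysis on the possible positions of the towers $T'$, $T''$, $T'''$ that supply the missing signal to the vertices just beyond a given tower's radius-$(t-3)$ ball (split according to whether a second tower lies within distance $t-2$, whether it sits on one of four exceptional boundary points, whether two deficient vertices share a helper, etc.), locating at least $4$ excess inside an explicit $7\times 9$ box near the ball's boundary. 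Nothing in your proposal substitutes for this; as written, a referee could not verify that any tower fails to collect $D+\tfrac43$.

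There is also a second, structural gap: you aim for a \emph{per-tower} bound of $D$, but a local argument of the kind you sketch only produces excess in a bounded region near each tower, and when towers cluster these regions overlap, so the excess cannot simply be credited to each tower separately. The paper needs a second lemma (Lemma \ref{excessconclusion}) to handle this: it builds an auxiliary graph on towers that are within $[-6,6]\times[-8,8]$ of one another, shows that each such pair generates at least $32$ units of excess in a region disjoint from the boxes used for isolated towers, and distributes this along the edges. This is also where the hypothesis $t>17$ actually enters (one needs the vertices of that shared region to lie within distance $t-3$ of both towers, forcing $t\geq 18$); your guess that $t>17$ arises from ``quadratic versus linear'' inequalities is not how the threshold appears. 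In short: correct frame, correct target constant, but the two lemmas that constitute the proof are missing.
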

Following the proof of Theorem \ref{densityof3broadcast}, in Section \ref{gensection}, we explore other statements in this direction and suggest some conjectures.
\par

Additionally, we extend the previous result on the $(t,r)$-broadcast domination number of paths \cite{crepeau2019} to powers of paths:

\begin{thm}\label{powerofpath}
Let $n \geq 1$ and $t \geq r \geq 1$. Then  $\gamma_{t,r}(P_n^{(k)}) = \lceil \frac{n+k(r-1)}{2kt-k(r+1)+1} \rceil$.
\end{thm}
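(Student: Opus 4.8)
The plan is to establish matching upper and lower bounds on $\gamma_{t,r}(P_n^{(k)})$, both organised around the quantity $D := 2kt - k(r+1) + 1 = k(2t-r-1)+1$, which turns out to be the largest \emph{useful} spacing between consecutive towers. Throughout I use that $d(i,j) = \lceil|i-j|/k\rceil$ in $P_n^{(k)}$, so a tower at $u$ contributes $f_u(v) = \max\{0,\, t-\lceil|u-v|/k\rceil\}$ to vertex $v$, and that a single tower $(t,r)$-covers exactly the $\le 2k(t-r)+1$ vertices within position-distance $k(t-r)$ of it. Since a tower placed outside $\{1,\dots,n\}$ may be moved to the nearer endpoint without decreasing any signal inside $\{1,\dots,n\}$, I may and will allow towers anywhere in $\mathbb Z$. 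The elementary fact I would isolate first is the identity $\lceil a/k\rceil + \lceil (L-a)/k\rceil = \lceil L/k\rceil$ for all $0 \le a \le L$, which holds \emph{precisely} when $L \equiv 1 \pmod k$ (for other residues the left-hand side can be one larger). Since $D \equiv 1 \pmod k$ and $\lceil D/k\rceil = 2t-r$, two towers at distance exactly $D$ leave every vertex between them with signal exactly $2t - \lceil D/k\rceil = r$, while $D \ge k(t-1)+1$ (using $t \ge r$) makes any tower at position-distance $\ge D$ contribute $0$; this explains both why $D$ is extremal and where the ``$+1$'' in $D$ comes from.

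\textbf{Upper bound.} Set $N := \lceil (n+k(r-1))/D\rceil$. I would place towers at positions $p_1 < \cdots < p_N$ with $p_1 \le k(t-r)+1$, $p_N \ge n-k(t-r)$, all $p_i \in \{1,\dots,n\}$, and every gap $p_{i+1}-p_i$ in $\{1,\dots,D\}$; a short computation shows such a placement exists because $ND \ge n+k(r-1)$ is equivalent to $(N-1)D \ge n - 2k(t-r)-1$, so one may take the total span $p_N-p_1$ anywhere in the nonempty range $[\max(N-1,\,n-2k(t-r)-1),\ \min(n-1,\,(N-1)D)]$ and break it into $N-1$ parts of size $\le D$. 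Verifying coverage is then local: vertices at or to the left of $p_1$, resp.\ at or to the right of $p_N$, lie within position-distance $k(t-r)$ of $p_1$, resp.\ $p_N$, so receive $\ge r$ from that one tower; a vertex inside a gap of length $L \le D$ receives, from the two bracketing towers alone, at least $2t - \max_a(\lceil a/k\rceil + \lceil(L-a)/k\rceil) \ge r$, the last inequality being exactly the content of the residue identity (with equality only when $L = D$); and every other tower is at position-distance $\ge D$, hence contributes $0$. The cases $N=1$ and $n \le 2k(t-r)+1$ are immediate from the ``one tower covers $\le 2k(t-r)+1$ vertices'' remark.

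\textbf{Lower bound.} I would argue by strong induction on $n$, peeling towers off the left. Let $\mathcal T = \{a_1 < \cdots < a_N\}$ be $(t,r)$-broadcasting and let $m$ be the number of towers reaching vertex $1$, i.e.\ with $a_i \le k(t-1)+1$; then $m \ge 1$, and in particular $a_1 \le k(t-1)+1$ always. If $m = 1$, then $f_{a_1}(1) \ge r$ forces $a_1 \le k(t-r)+1$, so $a_1 + k(t-1) \le D$; deleting $a_1$ (and, if necessary, pushing the remaining towers in $\{1,\dots,D\}$ out to position $D+1$, which only increases signals on $\{D+1,\dots,n\}$) exhibits a $(t,r)$-broadcasting set of $P^{(k)}_{n-D}$ of size $\le N-1$, so $\gamma_{t,r}(P_n^{(k)}) \ge 1 + \gamma_{t,r}(P^{(k)}_{n-D})$. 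If $m \ge 2$, then $a_2 \le k(t-1)+1$ too, and the same manoeuvre with $a_1,a_2$ deleted gives $\gamma_{t,r}(P_n^{(k)}) \ge 2 + \gamma_{t,r}(P^{(k)}_{n-2k(t-1)-1})$. Writing $g(n) := \lceil(n+k(r-1))/D\rceil$, one has $g(n) = 1 + g(n-D)$, $g$ is nondecreasing, and $2k(t-1)+1 \le 2D$ (again because $t \ge r$), so in both cases the inductive hypothesis yields $N \ge g(n)$. The finitely many genuinely small instances where these reductions are vacuous (namely $n - D \le 0$ with $m=1$, or $n \le 2k(t-1)+1$ with $m\ge 2$) are closed directly by the sharp facts ``one tower $(t,r)$-covers at most $2k(t-r)+1$ vertices'' and ``two towers $(t,r)$-cover at most $2D - k(r-1)$ vertices of $P_n^{(k)}$'', each a finite computation with one or two tent functions.

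The step I expect to be the main obstacle is the lower bound, and within it the handling of vertices covered by three or more towers: it is precisely because several towers may crowd near an endpoint that a single uniform reduction (``remove the leftmost tower, delete $D$ vertices'') fails and one is forced into the $m=1$ versus $m\ge2$ dichotomy, balancing the weaker ``$2k(t-1)+1$ per two towers'' bound against the slack $2D \ge 2k(t-1)+1$. Pinning down the exact additive constant $k(r-1)$ — rather than something off by a multiple of $k$ — depends throughout on two things: always measuring the reach of a boundary tower as $k(t-r)$ and not $k(t-1)$, and the residue identity for $\lceil\cdot\rceil$ that makes spacing $D \equiv 1 \pmod k$ strictly better than nearby spacings.
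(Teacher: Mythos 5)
Your upper bound is essentially the paper's construction: towers spaced at most $D=k(2t-r-1)+1$ apart, with the first within position-offset $k(t-r)$ of the left end and the last within $k(t-r)$ of the right end, verified by the same ceiling computation (the paper writes it as the arithmetic progression $i\equiv (t-r)k \bmod D$ plus possibly $v_{n-1}$). Your lower bound, however, takes a genuinely different route. The paper counts \emph{potentially useful signal}: each tower usefully transmits at most $rD$, the vertices need $nr$ in total, and the towers serving the two endpoints each waste $kr(r-1)/2$, which gives the bound in one line. You instead induct on $n$, peeling towers off the left end and splitting on whether one or at least two towers reach vertex $1$. Each approach buys something: the paper's is much shorter, but its assertion that ``$v_0$ receives signal $\ge r$, hence some tower lies within position-distance $k(t-r)$'' is only justified when a single tower reaches $v_0$; your $m=1$ versus $m\ge 2$ dichotomy is precisely the device needed to handle crowded-endpoint configurations, and the arithmetic supporting it ($a_1\le k(t-r)+1$ forces silence of $a_1$ beyond position $D$; $2D\ge 2k(t-1)+1$ via $t\ge r$; the residue identity for ceilings) all checks out. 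The price you pay is in the base cases: the one-tower fact is immediate, but the two-tower fact --- any two towers give signal $\ge r$ to at most $2D-k(r-1)$ vertices --- is asserted, not proved, and it is less routine than ``one or two tent functions'' suggests. The naive useful-signal count yields only $2D$; to extract the extra $k(r-1)$ one must account jointly for the signal wasted beyond the outermost covered vertices and the excess accumulated between the towers, since when both towers contribute to the leftmost covered vertex the boundary waste alone is roughly $k\alpha(\alpha-1)/2+k\beta(\beta-1)/2$ with $\alpha+\beta\ge r$, which at $\alpha=\beta=r/2$ is only about half of the required $kr(r-1)/2$ per side. The statement is true (and is needed only to force a third tower when $n>2D-k(r-1)$), so this is a fixable omission rather than a structural flaw, but as written it is the one step of your argument that is not yet a proof.
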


Crepeau et. al. found $\gamma_{t,r}(C_n) \leq \lceil \frac{n+r-1}{2t-r} \rceil $ and asked if this bound could be improved \cite{crepeau2019}. We answer their question by giving the exact value for the $(t,r)$ broadcast domination number for all powers of cycles:  
 \begin{thm}\label{powerofcycle}
Let $n\geq 1$ and $t \geq r \geq 1$. Then
$$\gamma_{t,r}(C_n^{(k)})=\begin{cases}
1 &\text{if } n\leq 2(t-r)k+1\\
2 &\text{if } 2(t-r)k+1 < n\leq (2t-r-1)k+1 \\
\left\lceil \frac{n}{(2t-r-1)k+1}\right\rceil &\text{if }n>(2t-r-1)k+1
\end{cases}$$
\end{thm}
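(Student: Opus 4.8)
The three branches of the formula correspond to three regimes for the number of towers, and the plan is to isolate two elementary coverage facts and then run matched upper and lower bounds in each regime. Throughout, set $s_0 := (2t-r-1)k+1$; note that the third branch reads $\lceil n/s_0\rceil$. The starting point is the observation that the distance in $C_n^{(k)}$ between two vertices at cyclic distance $c$ is $\lceil c/k\rceil$. Consequently a single tower delivers signal $\geq r$ exactly to the vertices within cyclic distance $k(t-r)$ of it, i.e. to $\min(n,2k(t-r)+1)$ vertices; this already settles the first case in both directions, since one tower dominates iff $n\leq 2k(t-r)+1$.

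The heart of the argument is the following \emph{two-tower lemma}: if towers sit at cyclic distance $s$ with no tower strictly between them on that arc, then every vertex of the arc receives signal $\geq r$ if and only if $s\leq s_0$. For the ``if'' direction I would split a vertex $x$ of the arc according to whether it lies within cyclic distance $k(t-1)$ of both endpoints. In the ``both'' range the combined signal is $2t-\lceil x/k\rceil-\lceil (s-x)/k\rceil$, and one checks that $\lceil x/k\rceil+\lceil (s-x)/k\rceil\leq 2t-r$ whenever $s\leq s_0$ (using at the extreme value $s=s_0$ that $s_0=(2t-r-1)k+1$, so that the worst ``carry'' cannot occur); in the complementary range a single endpoint must do the work, and the inequality $s\leq s_0$ is precisely the statement that every vertex out of range of one endpoint is within cyclic distance $k(t-r)$ of the other. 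For the ``only if'' direction, once $s=s_0+1$ the vertex at cyclic distance $k(t-r)+1$ from one endpoint and $k(t-1)+1$ from the other receives exactly $(r-1)+0<r$.

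The upper bound is then a direct construction: one tower if $n\leq 2k(t-r)+1$; two towers at cyclic distance $\lfloor n/2\rfloor$ (so both arcs have length $\leq s_0$, which is possible precisely when $n\leq 2s_0$) when $2k(t-r)+1<n\leq 2s_0$; and $g:=\lceil n/s_0\rceil$ towers with all $g$ cyclic gaps of length $\leq s_0$ (possible since $g\leq n\leq g s_0$) when $n>2s_0$. In each case the two facts above, together with the fact that signal only adds up, show the placement is $(t,r)$-broadcast dominating. To reconcile this with the stated piecewise formula I would note that $\lceil n/s_0\rceil=2$ exactly on $s_0<n\leq 2s_0$, so the ``two tower'' clause of the theorem is precisely the sub-range where $\lceil n/s_0\rceil$ would spuriously evaluate to $1$; and that the ``one tower'' range $n\leq 2k(t-r)+1$ sits inside $n\leq s_0$ since $s_0\geq k(t-r)+1$ for $t\geq r$.

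For the lower bound, the first two clauses are immediate from the two coverage facts: if $n>2k(t-r)+1$ one tower leaves a vertex uncovered, and if $n>2s_0$ then for any two towers at least one of the two arcs between them has length $>s_0$, hence an uncovered vertex, so $\gamma\geq 3$. The remaining and genuinely harder point is $\gamma_{t,r}(C_n^{(k)})\geq \lceil n/s_0\rceil$ in the dense regime, i.e. that any broadcast-dominating set with cyclic gaps $d_1,\dots,d_g$ satisfies $n=\sum_i d_i\leq g\,s_0$ (average gap at most $s_0$). Naive double counting of total signal is too weak: it yields only the fractional bound $rn/\bigl(t(k(t-1)+1)\bigr)$, which is strictly below $n/s_0$ when $r<t$, so one must use the integer structure of the placement. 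The approach I would take is an amortized/discharging argument: assign to each tower a contiguous block of vertices (tentatively, those strictly closer to it than to any other tower), and show that if any run of consecutive towers were responsible for more than $s_0$ vertices per tower, then some vertex in the offending gap would receive total signal $<r$ even after accounting for all nearby towers on \emph{both} sides, a computation which the two-tower lemma essentially already packages. An equivalent route is to unroll a cyclic solution into a period-$n$ $(t,r)$-dominating set of the $k$-th power of the two-way infinite path, whose minimum density is $1/s_0$ — the upper bound on that density coming from concatenating optimal solutions of $P_n^{(k)}$ furnished by Theorem~\ref{powerofpath}, and the matching lower bound from the same discharging analysis localized to a long window — so that $g/n\geq 1/s_0$. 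I expect this dense-regime lower bound to be the main obstacle; everything else is either the coverage computation or routine bookkeeping with the thresholds.
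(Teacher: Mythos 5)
Your treatment of the first two branches and of the dense-regime upper bound matches the paper's: the same equally spaced placement $\{v_i : i\equiv 0 \bmod s_0\}$ and essentially the same ceiling computation $\lceil p/k\rceil+\lceil (s_0-p)/k\rceil\leq 2t-r$. The genuine gap is the dense-regime lower bound, which you correctly identify as the crux but then leave as an unexecuted plan (discharging, or unrolling to the infinite path). Moreover, the reason you give for needing that machinery is mistaken: you dismiss double counting because it ``yields only the fractional bound $rn/\bigl(t(k(t-1)+1)\bigr)$,'' but that is the \emph{uncapped} count, in which each tower is charged its full transmitted signal. The paper closes the argument with a \emph{capped} double count in a few lines. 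For a vertex $v$ and tower $T$, define the useful contribution $\min\{f_T(v),r\}$. If the set is $(t,r)$-broadcasting then $\sum_T \min\{f_T(v),r\}\geq r$ for every $v$: either some single $f_T(v)\geq r$, in which case that one term already equals $r$, or else every $\min$ is just $f_T(v)$ and the sum is $\sum_T f_T(v)\geq r$. Summing over all $n$ vertices, the total useful signal is at least $nr$. On the other hand, a fixed tower contributes at most $r$ to each of the at most $2k(t-r)+1$ vertices within distance $t-r$, and at most $r-j$ to each of the at most $2k$ vertices at distance $t-r+j$ for $1\leq j\leq r-1$, for a total of at most $(2k(t-r)+1)r+2k\cdot\tfrac{r(r-1)}{2}=r\bigl((2t-r-1)k+1\bigr)=rs_0$. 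Hence $|\mathcal{T}|\,rs_0\geq nr$, i.e.\ $|\mathcal{T}|\geq \lceil n/s_0\rceil$. Your discharging or unrolling route might be made to work, but as written it is a sketch of the one step that actually requires a proof; the missing idea is simply to cap the per-tower, per-vertex contribution at $r$ before counting, which makes the counting bound tight exactly at $s_0$.
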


 \section{Proof of Theorem \ref{densityof3broadcast}}

First consider the following $(t,1)$ broadcasting set of vertices with minimal density  $\mathcal{T}_0=\{ma+nb:m,n\in\mathbb{Z}\}$ where $a=(t-1,t-2)$ and $b=(t-2,1-t)$. Part of this configuration is shown in Figure \ref{fig:startingconfiguration}.
 
 \begin{figure}[h]
\centering
\includegraphics[scale=0.6]{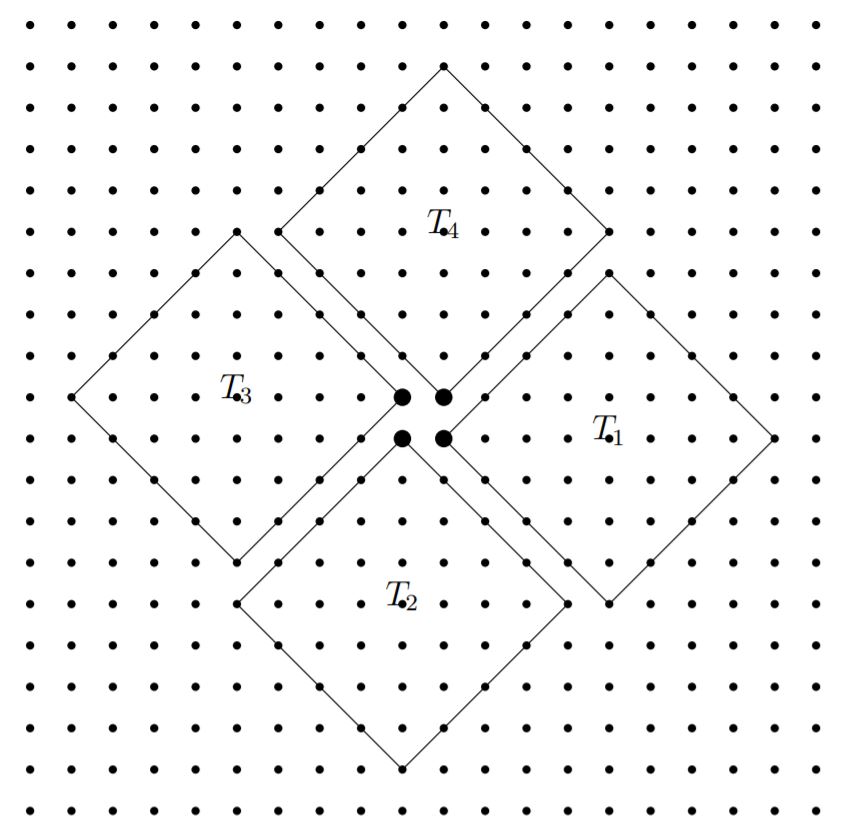}
\caption{An example of a $(5,1)$ broadcasting set. When considered as a $(6,3)$ broadcasting set, the four large vertices in the middle receive excess signal.}
\label{fig:startingconfiguration}
\end{figure}

We consider for every tower the usable transmission which is the sum over the amount transmitted to all the vertices, not exceeding $r$. For a tower at vertex $v$ that is $signal(v):=\sum_{u: d(u,v)\leq t-1} \min\{r,t-d(u,v)\}$. 

Note that the previously described $\mathcal{T}_0$ is also a configuration that provides a $(t+1,3)$  broadcast. We find that four vertices within distance $t-2$ of any tower receive signal 4 rather than the required 3. In Figure \ref{fig:startingconfiguration}, the bold vertices are the one with extra signal. To formalise the notion of extra signal, let $excess(v):=signal(v)-r$ be the \emph{excess signal} received by a vertex $v$ in a given $(t,r)$-broadcasting set of towers.
We would like to attribute the amount of excess to a given tower $T$. Note that the average attributable excess exactly determines the broadcast domination number on vertex transitive graphs.

Our goal is to show $\delta_{t,3}(\mathbb{Z}^2) \geq \delta_{t-1,1}(\mathbb{Z}^2)$. In the starting configuration, we have  exactly $4$ excess attributed to each tower. We want to show that the excess attributed to each tower must be at least $4$ in any $(t+1,3)$ broadcasting configuration, so that the configuration $\mathcal{T}_0$ minimises the excess. 

Henceforth fix some $(t,3)$ broadcasting set of towers. We will prove the following lemma.
\begin{lemma} \label{excesslem}
For any tower at $(x,y)$, there is at least four excess within the vertices $(x,y)+[t-4,t+2]\times[-4,4]$.
\end{lemma}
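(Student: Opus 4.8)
The plan is to normalise so that the tower in question sits at the origin and to reduce Lemma~\ref{excesslem} to a single inequality about the total usable signal received inside the $7\times 9$ box $B:=[t-4,t+2]\times[-4,4]$ (which has $63$ vertices). Writing $f_u(v)=\max\{0,t-d(u,v)\}$ and $signal(v)=\sum_{u\in\mathcal T}\min\{3,f_u(v)\}$ for the usable signal received at $v$, so that $excess(v)=signal(v)-3\ge 0$, the claim is equivalent to $\sum_{v\in B}signal(v)\ge 3\cdot 63+4=193$. First I would isolate the contribution of the origin tower itself: every $v=(x,y)\in B$ has $x\ge t-4>0$, hence $d(0,v)=x+|y|$, and a direct count gives $\sum_{v\in B}\min\{3,f_0(v)\}=29$. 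Since $signal(v)\ge 3$ for every $v\in B$, the remaining towers must supply at least $g(v):=3-\min\{3,f_0(v)\}$ to $v$, and $\sum_{v\in B}g(v)=189-29=160$. As $\sum_{v\in B}signal(v)=29+\sum_{v\in B}\sum_{u\neq 0}\min\{3,f_u(v)\}$, it therefore suffices to show that the towers other than the origin contribute in total at least $164$ to $B$, i.e. that they exceed the bare requirement $g$ by at least $4$.

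For this I would localise the search to the outer part $B_{\mathrm{out}}:=\{v\in B:\ d(0,v)\ge t\}$, the $47$ vertices that the origin tower does not reach at all, so each needs its full $3$ units from the other towers; $B_{\mathrm{out}}$ is a short staircase just past the eastern tip of the $\ell_1$-ball of radius $t-1$ about $0$, of diameter $14$, with extreme eastern vertex $p:=(t+2,0)$ and corners $(t-4,\pm 4)$. Dually, let $G_0$ be the four vertices of $B$ with $d(0,v)\le t-3$, which the origin already supplies with a full $3$: any other tower reaching a vertex of $G_0$ automatically creates excess there. I would then split into cases. If some tower of $\mathcal T\setminus\{0\}$ lies within distance $t-1$ of a vertex of $G_0$, it piles a value of up to $3$ onto a short run of (near-)$x$-axis vertices of $B$ on which the origin is already giving $1,2,3$, and a short computation shows this alone produces at least $4$ units of excess inside $B$. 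Otherwise every tower of $\mathcal T\setminus\{0\}$ is at distance $>t-1$ from all of $G_0$, and the geometry of the towers covering $B_{\mathrm{out}}$ becomes rigid: any tower delivering the full $3$ to $(t-4,4)$ must sit (essentially) north of $B$ at $\ell_1$-distance $t-4$ or $t-3$ from it, the one at $(t-4,-4)$ south, and the one at $p$ east, so at least two (typically three) distinct towers are involved, each hugging a corner of $B_{\mathrm{out}}$ (and if the signal at a corner is instead split among several towers one descends into a further finite subcase). Here I would use that each tower's usable signal decays with slope $1$ outside its flat region (the radius-$(t-3)$ ball about it), so that two towers whose flat regions must jointly cover the positive-diameter arc $B_{\mathrm{out}}$ necessarily overlap on $B$; that overlap, or else the leftover demand on the ``seam'' vertices where a single tower falls short of $3$, supplies the remaining excess, after a finite case check. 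Throughout one must charge surplus only to vertices where $\sum_{u\neq 0}\min\{3,f_u(v)\}$ strictly exceeds $g(v)$, so that it is genuinely disjoint from the $160$ already committed.

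The step I expect to be the main obstacle is this last, rigid case: ruling out a tiling-like placement of several far-away towers that meets every demand along $B_{\mathrm{out}}$ with total surplus $\le 3$. The extremal configuration $\mathcal T_0$ attains surplus exactly $4$ — indeed the constant $4$ is forced, being the gap $(6t^2-18t+19)-3(2t^2-6t+5)=4$ between the usable output of one tower and $3/\delta_{t-1,1}(\mathbb Z^2)$ — so the inequality is tight and the case analysis cannot afford to lose a single unit; organising it (I expect, by the position of the tower feeding $p$, then by those feeding the corners $(t-4,\pm 4)$) so that only finitely many configurations survive is the delicate part. The hypothesis $t>17$ is used repeatedly to ensure the relevant $\ell_1$-balls and their pairwise intersections have the generic large-$t$ shape, with no coincidences forced by small $t$, and it is also what makes $\mathcal T_0$ and the density $\delta_{t-1,1}(\mathbb Z^2)$ behave as stated. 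Finally, the box is placed at the eastern tip precisely because that is where, in $\mathcal T_0$, the four units of excess attributable to a tower accumulate — the four highlighted vertices of Figure~\ref{fig:startingconfiguration} — so it is the natural place to look for them in an arbitrary configuration.
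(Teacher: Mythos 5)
Your opening reduction is correct and genuinely clarifying: the box $B$ has $63$ vertices, the central tower delivers capped signal exactly $29$ to it, the bare demand on the remaining towers is $160$, and the lemma is precisely the assertion that they in fact deliver at least $164$; your remark that the constant $4$ is forced by $(6t^2-18t+19)-3(2t^2-6t+5)=4$ is a nice sanity check the paper never makes explicit. The problem is that everything after this reduction --- which is where all of the content of the lemma lives --- is deferred to ``a short computation'' or ``a finite case check'', and the one case conclusion you do assert concretely is false. In your Case 1, a tower within distance $t-1$ of a vertex of $G_0$ need not produce $4$ units of excess inside $B$: take $T'=(-3,0)$ in your normalisation. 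It is at distance exactly $t-1$ from $(t-4,0)\in G_0$, yet the only vertex of $B$ it reaches at all is $(t-4,0)$, to which it sends signal $1$, creating exactly $1$ unit of excess in $B$. (It creates abundant excess near the two towers themselves, but that lies far outside $B$, and the lemma insists on excess inside $B$.) So Case 1 cannot be closed by the local argument you sketch; you would have to fall back on analysing the unmet demand on $B_{\mathrm{out}}$, i.e.\ on your Case 2, which is exactly the part you leave open.

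The paper organises the casework differently and more effectively: rather than conditioning on proximity to the saturated vertices $G_0$, it pivots on the specific \emph{deficient} vertices at and just past the tip of the tower's ball (in its coordinates, with $T=(-t+2,0)$, the vertices $(0,0),(0,1),(1,1),(-1,1)$, each receiving at most $2$ from $T$), each of which forces a second tower within distance $t-1$ of it; its first case is that such a tower lies within $t-2$ of the tip vertex, so that it necessarily dumps signal at least $2$ where $T$ already gives $2$, and in every case the paper names four explicit vertices each carrying a unit of excess. Your Case 2 picture (towers pinned near the corners of $B_{\mathrm{out}}$, overlapping flat regions) is plausibly right, but the possibility that the signal at a corner is split among several distant towers is not a dismissible ``further finite subcase'': it is the bulk of the work, and since the inequality has zero slack no unit of excess can be conceded. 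As it stands the proposal is a correct reduction plus a plan, not a proof.
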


\begin{proof}
Without loss of generality consider a tower $T$, that will be fixed throughout the argument, at $(-t+2,0)$. We shall consider the following three main cases, along with their subcases. Figures that help visualize the cases are found in the Appendix.

\begin{case}\label{closetowers}
There is another tower $T'$ with $|T'|_1\leq t-2$.
\end{case} 

\begin{subcase}
 $T'$ is not on the $x$-axis.
\end{subcase}

Without loss of generality assume $T'$ is above the $x$-axis, then $T'$ is closer to $(0,1)$ than to $(0,0)$, so $t-|T'-(0,1)|_1\geq 3$ and similarly $t-|T'-(-1,1)|_1\geq 2$ and $t-|T'-(-1,0)|_1\geq 1$. Hence, we find that the excess on $(0,0),(0,1),(-1,0)$ and $(-1,1)$ alone is already more than four, as seen in Figure \ref{fig:81case1}.
\begin{figure}
\centering
\includegraphics[scale=0.6]{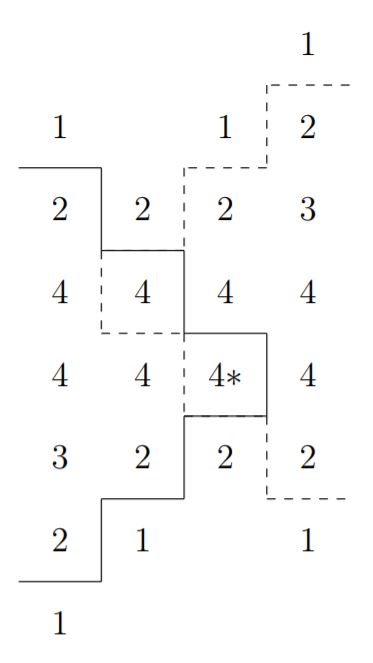}
\caption{The signal received from $T$ and $T'$ in Case 4.1.1, where second tower $T'$ is located at $(t-3,1)$. The line (dashed line resp.) denote the boundary of those vertices receiving at least 2 signal from $T$ ($T''$ resp.). For a minimal $(t-1,1)$ broadcasting set, these regions partition the plane. The $*$ marks the origin.}
\label{fig:81case1}
\end{figure}

\begin{subcase}
 $T'$ is at $(x,0)$ for $x\leq t-3$.
\end{subcase}
 If $T'$ is at $(x,0)$ for $x\leq t-3$, the vertices $(-1,0)$ and $(0,0)$ both have excess at least 2, as seen in Figure \ref{fig:81case2}. 
 
 \begin{figure}
\centering
\includegraphics[scale=0.6]{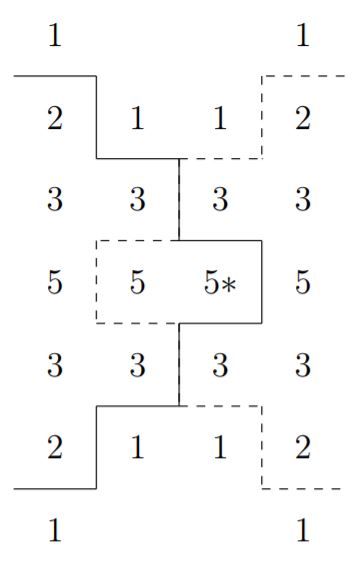}
\caption{The signal received from $T$ and $T'$ in Case 4.1.2, where second tower $T'=(t-3,0)$.}
\label{fig:81case2}
\end{figure}
\begin{figure}
\centering
\includegraphics[scale=0.6]{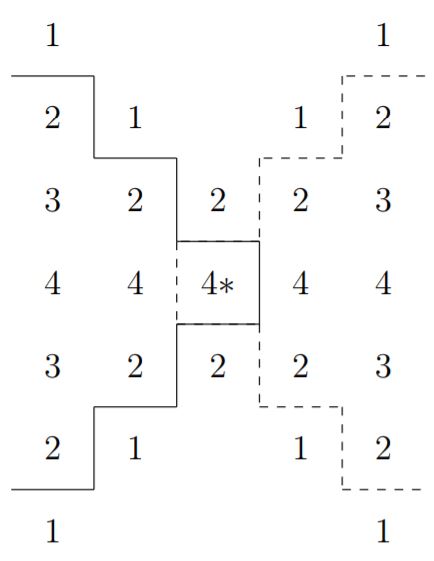}
\caption{The signal received from $T$ and $T'$ in Case 4.1.3, where the second tower is at $T'(t-2,0)$.}
\label{fig:81case3}
\end{figure}
\begin{subcase}
$T'$ is at $(t-2,0)$.
\end{subcase}
 Note that $(-1,0), (0,0)$ and $(1,0)$ all receive at least one excess from $T$ and $T'$ combined.  $(-1,1), (0,1)$ and $(1,1)$ receive 2 signal from $T$ and $T'$ combined, so they need another tower to supply at least one signal. If this is the same tower for two of these, one must must get excess signal. On the other hand consider they receive one signal from three different towers. Either $(-2,1), (2,1)$ or $(0,0)$ must receive excess signal from these towers, or $(0,2)$ receives at least  signal 4 from the three towers combined, as seen in Figure \ref{fig:81case3}

This concludes Case 4.1.

We now distinguish two possible configurations for the tower $T'$ giving additional signal to vertex $(0,0)$. Note that this tower has distance exactly $t-1$ to the origin. Consider whether $T'\in\{(0,t-1),(1,t-2),(0,1-t),(1,2-t)\}$ or not. Note that up to reflection, if $T'\in\{(0,t-1),(1,t-2),(0,1-t),(1,2-t)\}$, we are in the realm of Figure \ref{fig:82}.

\begin{figure}
\centering
\includegraphics[scale=0.6]{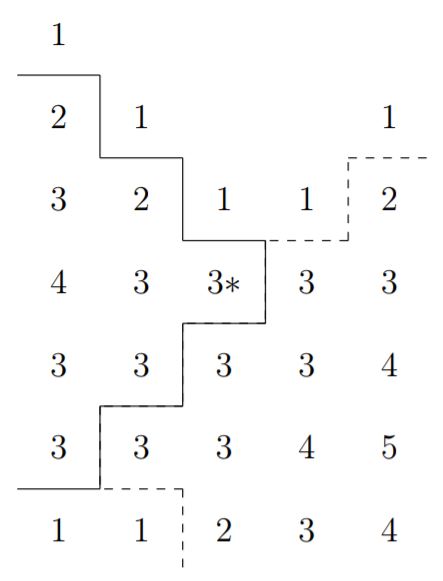}
\caption{The signal received from $T$ and $T'$ in Case 4.2 for the specific example $t=5$ where second tower $T'=(2,4)$.}
\label{fig:82}
\end{figure}

\begin{case}
$T'\not\in\{(0,t-1),(1,t-2),(0,1-t),(1,2-t)\}$
\end{case}
Reflecting if necessary, assume $T'$ is somewhere on $y=x-(t-1)$. 

Note that in this case both $(0,1)$ and $(1,1)$ receive 1 signal from $T$ and $T'$ combined. Hence, they both need signal from an additional tower. 
\begin{subcase}
One additional tower covers both $(0,1)$ and $(1,1)$.
\end{subcase}
This tower will transmit at least a combined signal of three to $(0,0)$ and $(1,0)$, causing a total excess of at least 4 on these four vertices combined.

\begin{subcase} \label{11next}
 $(0,1)$ and $(1,1)$ receive additional signal from two distinct towers.
\end{subcase}
Consider the tower $T''$ giving additional signal to $(-1,1)$. If that tower gives signal at least 2 to $(-2,1)$ or $(-1,0)$, we immediately find the excess. As we additionally know there is no tower at $(0,t-1)$, we find that it must be at $(-1,t)$.

Note that more specifically we know that $(0,1)$ must receive signal from two additional towers. A tower that gives signal 1 to $(0,1)$ must give at least 1 signal to one of $(-1,2)$ and $(1,0)$ and to one of $(-2,3)$ and $(2,-1)$. All of those points already receive 3 signal, so the two additional towers for $(0,1)$ give rise to at least 4 excess on these vertices.

\begin{figure}
\centering
\includegraphics[scale=0.6]{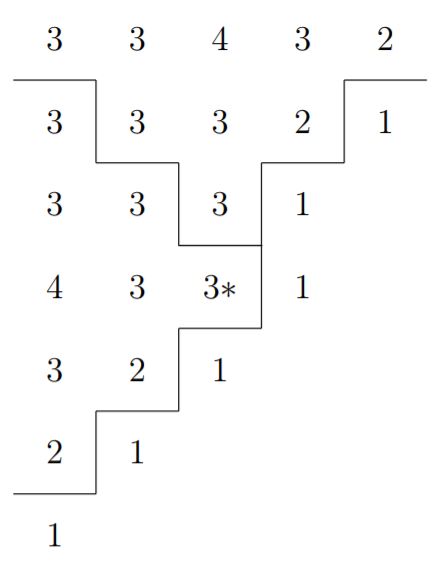}
\caption{The signal received from $T$ and $T'$ in Case 4.3, where second tower $T'=(1,t-2)$.}
\label{fig:83}
\end{figure}
\begin{case}
$T'\in\{(0,t-1),(1,t-2),(0,1-t),(1,2-t)\}$
\end{case}
Without loss of generality $T'=(1,2-t)$. Note that $(-1,1)$ receives only signal 2 from $T$ and $T'$, so receives additional signal from another tower $T''$. By Case \ref{closetowers}, we only need to consider towers at distance $t-1$ from $(-1,1)$. There are only two significant cases. If $T''$ has $x$-coordinate at least 1, then the excess signal on $(0,0),(1,0)$ and $(1,-1)$ is at least 4 already. Hence, $T''$ is either $(0,t-1)$ or $T(-1,t)$. 
\begin{subcase}
$T''=(0,t-1)$
\end{subcase}
Note that $(1,1)$ and $(1,2)$ only receive 2 signal from towers $T, \;T'$ and $T''$. If these two were reached by the same tower say $T'''$, then one of the two must receive signal 2 from $T'''$. If that is $(1,1)$, note that $(0,0),(0,1),(1,0)$ and $(1,1)$ all receive excess at least 1. If it is $(1,2)$, note that $(0,0),(0,2),(1,2)$ and $(1,3)$ all receive excess at least 1, as seen in Figure \ref{fig:83}.

\begin{subcase}
$T''=(-1,t)$
\end{subcase}
This case is completely analogous to Subcase \ref{11next}.

On the other hand, suppose the points $(1,1)$ and $(1,2)$ receive signal 1 from two distinct towers. If either of these towers transmits 2 signal to $(0,1),(0,2), (1,3)$ or $(1,0)$, the excess is immediately more than 4. The towers transmit 2 to $(1,1)$ and $(1,2)$ respectively, then $(2,2)$ receives 1 excess signal and $(3,1)$ receives 2 excess signal.
\end{proof}
The next goal is to show that for large  $t$, we have excess at least four times the number of towers. 
\begin{lemma}\label{excessconclusion}
Let $t>17$. For any $(t,3)$ broadcasting set $\mathcal{T}$ there is at least $4|\mathcal{T}|$ excess.
\end{lemma}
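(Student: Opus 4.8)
The strategy is to promote the local statement of Lemma~\ref{excesslem} to a global count by averaging over the symmetries of the grid and then controlling the overcount. Fix a $(t,3)$-broadcasting set $\mathcal{T}$ with $t>17$. For a tower $T=(x,y)$, Lemma~\ref{excesslem} places at least $4$ units of excess in the box $B_T^{\rightarrow}:=(x,y)+[t-4,t+2]\times[-4,4]$. Since the $\ell_1$ metric on $\mathbb{Z}^2$ is invariant under the eight symmetries of the square lattice, applying the lemma to the rotated configurations shows that each of the four boxes $B_T^{\rightarrow},B_T^{\uparrow},B_T^{\leftarrow},B_T^{\downarrow}$ obtained from $B_T^{\rightarrow}$ by rotating about $T$ through multiples of $90^{\circ}$ also carries at least $4$ units of excess; and since each of these boxes lies in a coordinate strip of width $7$ at distance at least $t-4>13$ from $T$, the four boxes around a single tower are pairwise disjoint once $t>17$.

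Summing the four-box estimate over all towers and using disjointness within each tower gives
\[
16\,|\mathcal{T}|\ \le\ \sum_{T\in\mathcal{T}}\sum_{d}\operatorname{excess}\!\bigl(B_T^{d}\bigr)\ =\ \sum_{v}\operatorname{excess}(v)\,M(v),
\]
where $M(v):=\#\{(T,d):v\in B_T^{d}\}$ is the number of towers lying in the union $A(v)$ of the four $7\times 9$ boxes placed at distance $\approx t$ from $v$ in the four axis directions. Hence Lemma~\ref{excessconclusion} follows once we show $\sum_v\operatorname{excess}(v)\,(M(v)-4)\le 0$, and it follows immediately in the favourable case $M(v)\le 4$ for all $v$, which holds whenever the towers are reasonably spread out, as then the thin annulus $A(v)$ can contain at most four of them.

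I expect the main obstacle to be the vertices with $M(v)\ge 5$. These arise precisely when five or more towers crowd into $A(v)$; because $A(v)$ is a union of four small, widely separated boxes, two of the offending towers must then either lie within bounded $\ell_1$-distance of one another or be positioned so that their signals pile up strongly on a common vertex near $v$, and in either case one can exhibit near $v$ a quantity of excess that is not yet counted by the four-box certificates of those towers. Reinserting this surplus into the sum should restore $\sum_v\operatorname{excess}(v)\,(M(v)-4)\le 0$. Making this precise --- arranging the bookkeeping so the extra excess produced by clustered or signal-aligned towers is credited exactly once, and checking that all the separation inequalities involved (the boxes have half-width $4$, so quantities such as $t-4$, $t-8$, $2(t-1)-14$ must be positive and correctly ordered) hold simultaneously --- is the delicate part, and this is where $t>17$ enters; if a uniform argument proves elusive I would instead extend the case analysis of Lemma~\ref{excesslem} locally around such a vertex $v$.

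Once $\sum_v\operatorname{excess}(v)\ge 4|\mathcal{T}|$ is established, the lemma is complete. In the next step it combines with the double-counting identity $\sum_v\operatorname{excess}(v)=|\mathcal{T}|\cdot\sum_{\|w\|_1\le t-1}\min\{3,\,t-\|w\|_1\}-3|V|$ (interpreted in the density sense, the boundary contributions being negligible) to yield $\delta_{t,3}(\mathbb{Z}^2)\ge\delta_{t-1,1}(\mathbb{Z}^2)$, which together with the construction of Drews, Harris and Randolph proves Theorem~\ref{densityof3broadcast}.
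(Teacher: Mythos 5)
Your reduction to the inequality $\sum_v \operatorname{excess}(v)\,(M(v)-4)\le 0$ is where the proof actually lives, and you leave it unproved. The set $A(v)$ is a union of four $7\times 9$ boxes, and nothing in the definition of a $(t,3)$-broadcasting set prevents arbitrarily many towers from crowding into one of those boxes, so $M(v)$ is unbounded in general and the ``favourable case $M(v)\le 4$'' is not a case you may restrict to. Your sketched repair --- that $M(v)\ge 5$ forces two towers into the same small box, hence within bounded $\ell_1$-distance of each other, which in turn forces extra excess --- identifies the right phenomenon, but the surplus generated by a close pair of towers sits near those towers (at distance roughly $t$ from $v$), not ``near $v$'', and you give no bookkeeping that credits it consistently: a single clustered pair inflates $M(v)$ at many vertices $v$ and in up to four directions at once, so one must verify that the surplus it creates absorbs all of that overcount simultaneously. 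As written, the proposal is a correct first step (the four-box averaging and the disjointness of the rotated boxes for $t>17$ are fine) followed by an honest admission that the essential difficulty is unresolved; that difficulty is the entire content of the lemma.

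The paper's proof avoids the averaging altogether and instead runs a dichotomy. It uses only one far box $T+[t-4,t+2]\times[-4,4]$ per tower, and only for towers that are \emph{isolated}, i.e.\ have no other tower in $T+[-6,6]\times[-8,8]$; for such towers the far boxes are automatically pairwise disjoint (integer coordinates force a gap of at least $7$ horizontally or $9$ vertically), so no overcount arises and Lemma \ref{excesslem} hands each of them $4$ excess directly. For the remaining towers it builds a proximity graph with an edge whenever one tower lies in the other's $[-6,6]\times[-8,8]$ rectangle, observes that every vertex of $R'=\frac{T+T'}{2}+[-4,2]\times[-4,4]$ receives at least $3$ excess from the pair $T,T'$ alone, discards the portion of $R'$ that could be claimed by far boxes of other towers to retain at least $6\cdot 8-4\cdot 4=32$ excess per edge, and uses the fact that a graph with no isolated vertices has at least half as many edges as vertices to assign each clustered tower $16\ge 4$ excess. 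If you want to salvage your approach, the cleanest fix is to import exactly this dichotomy --- treat clustered towers by harvesting the local excess near their midpoints --- rather than trying to make the weighted average over four rotated boxes come out right.
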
 
\begin{proof}
We devise a way to attribute excess to towers. First to all towers $T$ with no other towers within $T+[-6,6]\times [-8,8]$, assign 4 excess from the rectangle $T+[t-4,t+2]\times[-4,4]$. Note that this excess exists by Lemma \ref{excesslem} and that these rectangles are disjoint.

 Let $R_i$ be a $[-6,6]\times [-8,8]$ rectangle around a tower $T_i$. If a tower $T_j$ lies in $R_i$, place an edge between $T_i$ and $T_j$. Suppose $T'$ lies in the rectangle around $T$, so the edge $TT'$ exists. We find that all the vertices in $R'=\frac{T+T'}{2} +[-4,2]\times[-4,4]$ receive at least 3 excess from $T$ and $T'$. Moreover, $R'$ intersects at most four regions of the form $T''+[t-4,t+2]\times[-4,4]$ with $T''\in \mathcal{T}$ as considered in Lemma \ref{excesslem}. Therefore, at least $6\cdot 8-4\cdot4=32$ excess remains available in $R'$. This is cumulative in the sense that if regions of the form $R'$ overlap for different edges in the graph, then still at least 32 excess is available per edge.
As the number of edges is at least half the number of vertices, we find that for every vertex, at least 16 excess can be assigned to that vertex. Hence, we find at least $4|\mathcal{T}|$ excess.
\end{proof}

We are now ready to prove Theorem \ref{densityof3broadcast}.
\begin{proof}
Let $G_{2n+1,2n+1}$ be the $2n+1$ by $2n+1$ grid. We then need at least $3(2n+1)^2$ signal to be transmitted. By Lemma \ref{excessconclusion}, a $(t,3)$-broadcasting set $\mathcal{T}$ of towers can transmit at most $|\mathcal{T}| 3(t-1)^2$ signal effectively. Therefore $|\mathcal{T}|\geq \frac{3(2n+1)^2}{3(t-1)^2}$, so we find
\begin{align*}
\delta_{t,3}(\mathbb{Z}^2)&\geq \lim_{n\to\infty} \frac{ \left(\frac{3(2n+1)^2}{3(t-1)^2}\right)}{(2n+1)^2}\\
&=\frac{1}{(t-1)^2}\\
&=\delta_{t-1,1}(\mathbb{Z}^2)
\end{align*}
\end{proof}

\section{Generalizations of the $(t,r)$ broadcast number for grids}\label{gensection}

The proof of Theorem \ref{densityof3broadcast} suggests that the result may be extended to any odd value of $r$. Note first the following simple, though seemingly unobserved fact;
\begin{prop}\label{t1bound}
For all $t,k\geq 1$; $$\delta_{t,1}(\mathbb{Z}^2)\geq \delta_{t+k,1+2k}(\mathbb{Z}^2)$$
\end{prop}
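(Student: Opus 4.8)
The proof should follow the same general strategy as the proof of Theorem \ref{densityof3broadcast}: start from an extremal $(t,1)$ broadcasting set and show it induces a $(t+k,1+2k)$ broadcasting set of the same density. The plan is to take a minimal-density $(t,1)$ broadcasting set $\mathcal{T}$ in $\mathbb{Z}^2$ — one achieving $\delta_{t,1}(\mathbb{Z}^2)$ — and argue that the \emph{same} set $\mathcal{T}$, when each tower is assigned transmission strength $t+k$ instead of $t$, forms a valid $(t+k,1+2k)$ broadcasting set. Since density is a property only of the set of tower locations and not of the transmission strength, this immediately yields $\delta_{t+k,1+2k}(\mathbb{Z}^2)\le \delta_{t,1}(\mathbb{Z}^2)$, which is the claimed inequality.

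The key step is the pointwise signal comparison. Fix any vertex $v\in\mathbb{Z}^2$. Because $\mathcal{T}$ is $(t,1)$ broadcasting, there is at least one tower $u\in\mathcal{T}$ with $f^{(t)}_u(v)=t-d(u,v)\geq 1$, i.e.\ $d(u,v)\leq t-1$. First I would handle the single-tower contribution: for this same $u$, the strength-$(t+k)$ signal at $v$ is $t+k-d(u,v)\geq t+k-(t-1)=k+1$. That is already more than half of what we need. To get the remaining $k$ signal, I would use the fact that in a minimal $(t,1)$ set, the ``closest tower'' regions tile the plane tightly, so $v$ cannot be too deep inside one tower's territory without a second tower being within distance $t-1+(\text{something controlled})$; more carefully, one shows that the sum $\sum_{u\in\mathcal{T}} \max\{0,(t+k)-d(u,v)\} \geq \sum_{u\in\mathcal{T}} \max\{0, t-d(u,v)\} + k\cdot(\#\{u: d(u,v)\le t-1\})$ and then leverages that either one tower already supplies $t-d(u,v)\ge 1$ giving an extra $k$, combined with... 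Actually the cleanest route: every vertex with $d(u,v)\le t-1$ for some $u$ gains exactly $k$ extra signal from that tower alone under strength $t+k$, and summing the original guarantee $\sum f^{(t)}_u(v)\ge 1$ with the gain gives $\sum f^{(t+k)}_u(v) \ge \sum_u \big(f^{(t)}_u(v) + k\cdot\mathbf{1}[d(u,v)\le t-1]\big) \ge 1 + k\cdot 1$ when only one tower is in range, and strictly more when several are. But $1+k < 1+2k$ for $k\ge 1$, so a one-tower bound is insufficient — this is precisely the obstacle.

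The main obstacle, then, is showing that every vertex is within distance $t-1$ of at least \emph{two} towers of a minimal $(t,1)$ set, or else is very close (within distance roughly $t-1-k$) to one tower so that a single tower supplies $\geq 1+2k$. For the specific extremal lattice $\mathcal{T}_0 = \{ma+nb : m,n\in\mathbb{Z}\}$ with $a=(t,t-1)$, $b=(t-1,-t)$ appropriately scaled (the $\delta_{t,1}$-optimal configuration, whose ``diamond'' cells of radius $t-1$ tile $\mathbb{Z}^2$), one checks directly: a vertex in the interior of a diamond cell at $\ell^1$-distance $d\le t-1$ from the center gets $t+k-d$ from that center, which is $\ge 1+2k$ exactly when $d\le t-1-k$; and a vertex with $d > t-1-k$ lies near the boundary of its cell, hence within $\ell^1$-distance $\le t-1$ of (at least) one neighbouring center, each contributing $\ge k+1$, for a total $\ge (k+1)+(k+1) = 2k+2 > 2k+1$. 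Finishing the proof amounts to verifying this boundary estimate for the explicit lattice: the diamonds $c + \{x : |x|_1 \le t-1\}$ for centers $c\in\mathcal{T}_0$ cover $\mathbb{Z}^2$, and any point at distance exactly $t-1-j$ (for $0\le j\le k$) from its own center lies at distance at most $t-1$ from a neighbouring center, because consecutive centers are at $\ell^1$-distance $2t-1$ apart (so the diamonds of radius $t-1$ meet edge-to-edge with overlap, and a point that is $j$ units from the boundary on one side is at most $(t-1) - 0$ ... ) — this geometric bookkeeping on the known optimal lattice is routine but is the only place real computation is needed, and it is where I would expect to spend the most care.

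\begin{proof}[Proof sketch]
Let $\mathcal{T}$ be a $(t,1)$ broadcasting set in $\mathbb{Z}^2$ of density $\delta_{t,1}(\mathbb{Z}^2)$; we may take $\mathcal{T}$ to be the optimal lattice, whose $\ell^1$-balls of radius $t-1$ centred at the towers cover $\mathbb{Z}^2$ and whose nearest towers are at $\ell^1$-distance $2t-1$. Claim: the same set $\mathcal{T}$, with each tower now transmitting strength $t+k$, is $(t+k,1+2k)$ broadcasting; as density ignores the transmission strength, this gives $\delta_{t+k,1+2k}(\mathbb{Z}^2)\le\delta_{t,1}(\mathbb{Z}^2)$.

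Fix $v\in\mathbb{Z}^2$ and let $u$ be a tower with $d:=|u-v|_1\le t-1$, which exists by the covering property. Under strength $t+k$ the tower $u$ alone delivers $t+k-d\ge k+1$ to $v$. If $d\le t-1-k$, then $u$ delivers $t+k-d\ge 2k+1$ and we are done. Otherwise $t-k\le d\le t-1$, so $v$ lies within $\ell^1$-distance $k$ of the boundary of $u$'s radius-$(t-1)$ ball; since adjacent towers are at distance $2t-1$, such a $v$ is also within $\ell^1$-distance $t-1$ of a second tower $u'$, which delivers a further $t+k-|u'-v|_1\ge k+1$. Hence $v$ receives at least $(k+1)+(k+1)=2k+2\ge 2k+1$. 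In all cases $\sum_{w\in\mathcal{T}}\max\{0,(t+k)-|w-v|_1\}\ge 2k+1$, as required.
\end{proof}
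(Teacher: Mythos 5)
There is a genuine gap in the key geometric step. You reduce to the optimal $(t,1)$ lattice and split into two cases according to $d=|u-v|_1$; the problematic case is $t-k\le d\le t-1$, where you claim $v$ ``is also within $\ell^1$-distance $t-1$ of a second tower $u'$.'' This is false for the optimal lattice: there the radius-$(t-1)$ $\ell^1$-balls around the towers \emph{partition} $\mathbb{Z}^2$ (adjacent centres are at distance $2t-1$), so if some $v$ had two towers within distance $t-1$ those towers would be at distance at most $2t-2<2t-1$, a contradiction. Hence no vertex of the annulus gets a second tower within range $t-1$, and your bound of $(k+1)+(k+1)=2k+2$ cannot be right; indeed the paper's Claim \ref{outsideofsquare} shows the vertices outside the central square $S'$ receive \emph{exactly} $2k+1$ in this configuration, so any argument yielding $2k+2$ everywhere on the boundary region must contain an error. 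What you actually need in that case is a second tower $u'$ with $d(u',v)\le t-1+(t-1-d)$... more precisely with $d(u,v)+d(u',v)\le 2t-1$, and your case split does not supply it.

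The paper's proof closes exactly this gap with an auxiliary-point trick, and as a bonus works for an \emph{arbitrary} $(t,1)$-broadcasting set $\mathcal{T}$ (no appeal to the structure of the extremal lattice is needed): given $v$ and a tower $T$ with $d(T,v)\le t-1$, extend an $\ell^1$-geodesic from $T$ through $v$ to a point $u$ with $d(T,u)=d(T,v)+d(v,u)=t$ (always possible in $\mathbb{Z}^2$). Since $u$ must itself be $(t,1)$-covered, there is a tower $T'$ with $d(T',u)\le t-1$, and the triangle inequality gives $d(T',v)\le d(T',u)+d(u,v)\le t-1+d(u,v)$. At strength $t+k$, tower $T$ delivers $t+k-d(T,v)=k+d(u,v)$ and tower $T'$ delivers at least $k+1-d(u,v)$ (or else $T$ alone already delivers more than $2k+1$), for a total of at least $2k+1$. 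Your two-case outline can be repaired by replacing the false ``second tower within $t-1$ of $v$'' claim with this geodesic-extension argument.
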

\begin{proof}
It suffices to show a $(t,1)$ broadcasting set of towers $\mathcal{T}$ is also $(t+k,1+2k)$ broadcasting. Consider a vertex $v\in\mathbb{Z}^2$. As $\mathcal{T}$ is $(t,1)$-broadcasting, $\exists T\in\mathcal{T}$ with $d(T,v)<t$. Find a vertex $u\in\mathbb{Z}^2$ with $d(T,u)=d(T,v)+d(u,v)=t$, which is possible in the plane. Again, as $\mathcal{T}$ is $(t,1)$ broadcasting, there is a $T'\in\mathcal{T}$ with $d(T',u)<t$. Now note that if all towers transmitted $t+k$ of signal, then $v$ receives $t+k-d(T,v)=k+d(u,v)$ signal from tower $T$ and $t+k-d(T',v)\geq t+k-d(u,v)-d(T',u)\geq k+1-d(u,v)$ from tower $T'$. In total $v$ thus receives signal at least $k+d(u,v)+k+1-d(u,v)=2k+1$. Hence, $\mathcal{T}$ is also $(t+k,1+2k)$ broadcasting.
\end{proof}
Similarly we have
\begin{prop}\label{t2bound}
For all $t,k\geq 1$; $$\delta_{t,2}(\mathbb{Z}^2)\geq \delta_{t+k,2+2k}(\mathbb{Z}^2)$$
\end{prop}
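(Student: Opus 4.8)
The plan is to mimic the proof of Proposition \ref{t1bound} as closely as possible, replacing the single ``start tower'' $T$ by the natural configuration that witnesses a $(t,2)$ broadcast. So suppose $\mathcal{T}$ is $(t,2)$-broadcasting and fix a vertex $v\in\mathbb{Z}^2$. First I would record that, because each tower transmits only integer-strength signal, the hypothesis $\sum_{u\in\mathcal{T}}f_u(v)\geq 2$ leaves two cases: either some single tower $T$ already satisfies $t-d(T,v)\geq 2$, or there are (at least) two towers $T_1,T_2$ each contributing exactly $1$, i.e.\ $d(T_i,v)=t-1$. Handling these two cases separately is the whole content; the conclusion we want is that, if every tower instead transmits $t+k$, then $v$ receives at least $2+2k$ signal.

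In the first case, $v$ receives $t+k-d(T,v)\geq 2+k$ from $T$ alone, and I would produce an auxiliary vertex $u$ on a geodesic from $T$ through $v$ with $d(T,u)=d(T,v)+d(u,v)=t$ (possible in the plane since $d(T,v)\leq t-2$), pick a tower $T'$ with $d(T',u)\leq t-1$ by $(t,2)$-broadcasting — actually with the integer argument $d(T',u)\le t-1$, and if $d(T',u)\le t-2$ we already get $k+2$ from $T'$ too — and estimate $t+k-d(T',v)\geq t+k-d(u,v)-d(T',u)\geq k-d(u,v)$, so the two towers give at least $(2+k)+(k-d(u,v))$; here I need to be slightly more careful than in Proposition \ref{t1bound} because I am only guaranteed one unit of slack, so I would instead take $u$ with $d(T,u)=d(T,v)+d(u,v)=t$ only when $d(u,v)$ is large enough and otherwise argue directly. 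In the second case, with $d(T_1,v)=d(T_2,v)=t-1$, each $T_i$ contributes $t+k-(t-1)=k+1$ to $v$ under the stronger broadcast, for a total of $2k+2$ immediately — this case is actually free. So the real work is the first case.

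The cleanest way to unify everything is the following. Given $v$, let $S\subseteq\mathcal{T}$ be a minimal set of towers with $\sum_{T\in S}(t-d(T,v))\geq 2$; then $|S|\in\{1,2\}$. For each $T\in S$ with $d(T,v)=t-j$ (so $j\in\{1,2\}$, contribution $j$), I want to ``upgrade'' its contribution from $j$ to $j+2k$. If $j=1$ this is automatic as above. If $j=2$, i.e.\ $|S|=\{T\}$ a single tower with $d(T,v)=t-2$: choose $u$ with $d(T,u)=t$, $d(u,v)=2$, lying on a $T$–$v$ geodesic extended by two steps (possible in $\mathbb{Z}^2$); by $(t,2)$-broadcasting there is a tower $T'$ (possibly equal — no, distinct, since $d(T,u)=t$) with $t-d(T',u)\geq 1$, and then $t+k-d(T',v)\geq t+k-d(T',u)-2\geq k-1$, while $t+k-d(T,v)=k+2$, giving total $\geq 2k+1$. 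That is one short, so I should instead take $u$ with $d(u,v)=t-(t-2)=2$ but demand a tower giving $v$ enough — the honest fix is to apply the $(t,2)$ hypothesis at $u$ to get $\sum f_{T''}(u)\geq 2$ and push \emph{all} of that through to $v$, losing only $d(u,v)=2$ per tower of signal but gaining $k$ per tower: if two towers each give $\geq 1$ to $u$ they give $\geq k-1$ each to $v$ under strength $t+k$, total $\geq 2k-2$ from them, plus $k+2$ from $T$, which is $\geq 3k$. The one genuinely delicate point, and what I expect to be the main obstacle, is the bookkeeping when the tower(s) supplying $u$ coincide with $T$ or with each other, or when $v$'s own deficiency is split in a way that makes the ``$u$ on a geodesic'' choice non-canonical; I would resolve this by always choosing $u$ at distance exactly $t-d(T,v)\le 2$ from $v$ along a $T$–$v$ geodesic so that $T$ itself is one of the towers covering $u$ with surplus, and then only needing \emph{one more} unit of signal at $u$ from some $T'\ne T$, which $(t,2)$-broadcasting at $u$ supplies, yielding the required $2+2k$ after the bounds above.
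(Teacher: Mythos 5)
Your case split is the same as the paper's: either some tower $T$ has $d(T,v)\leq t-2$, or two towers sit at distance exactly $t-1$ from $v$; the second case is handled correctly and identically (each contributes $k+1$ under strength $t+k$). The problem is that the first case --- which you rightly identify as the real work --- is never actually closed. Write $s:=t-d(T,v)\geq 2$, so $T$ gives $v$ exactly $k+s$ and you may assume $s\leq k+1$. Your first attempt (take $u$ with $d(T,u)=t$ and pull one tower $T'$ with $d(T',u)\leq t-1$ back to $v$) gives $k+s+\max\{0,k+1-s\}$, which is $2k+1$ at $s=k+1$, as you note. Your ``push all of $u$'s two units through'' fix gives, in the worst subcase of two towers each at distance exactly $t-1$ from $u$, a total of $k+s+2\max\{0,k+1-s\}$, again $2k+1$ at $s=k+1$ (and your ``$\geq 3k$'' figure is anyway short of $2k+2$ when $k=1$). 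Your final resolution is internally inconsistent: if $d(u,v)=t-d(T,v)=s$ then $d(T,u)=t$, so $T$ contributes \emph{nothing} to $u$ rather than ``covering $u$ with surplus,'' and you are back to needing two units at $u$ from other towers, i.e.\ back to the $2k+1$ bound.

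The one missing idea is to place $u$ at distance $t-1$ from $T$, not $t$: take $u$ on a $T$--$v$ geodesic extended so that $d(T,u)=d(T,v)+d(u,v)=t-1$, i.e.\ $d(u,v)=s-1$. Then $T$ supplies exactly one of the two units that $u$ requires, so the $(t,2)$ hypothesis at $u$ forces some $T'\neq T$ with $d(T',u)\leq t-1$, whence $d(T',v)\leq (t-1)+(s-1)$ and $T'$ gives $v$ at least $k+2-s$ under strength $t+k$; together with the $k+s$ from $T$ this is exactly $2k+2$ (and if $s\geq k+2$ then $T$ alone already suffices). This is what the paper's terse ``the proof of the previous lemma suffices completely analogously'' must be unwound into. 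Two smaller slips: a minimal set $S$ witnessing signal $2$ at $v$ can be a singleton whose contribution $j$ exceeds $2$, so your restriction $j\in\{1,2\}$ is not legitimate (harmless, since larger $j$ only helps, but it should be said); and ``if $d(T',u)\leq t-2$ we already get $k+2$ from $T'$'' confuses the distance to $u$ with the distance to $v$ --- the correct gain at $v$ is $k+2-d(u,v)$.
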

\begin{proof}
As before, consider $\mathcal{T}\subset\mathbb{Z}^2$ to be $(t,2)$-broadcasting and $v\in\mathbb{Z}^2$. We will show that if the towers in $\mathcal{T}$ transmitted $t+k$ signal, then all vertices would receive at least $2+2k$ signal. If there is a $T\in\mathcal{T}$ with $d(T,v)\leq t-2$ the proof of the previous lemma suffices completely analogously. If there is no such $T$, there must be $T,T'\in\mathcal{T}$ with $d(T,v)=d(T',v)=t-1$. That implies that $v$ receives signal $k+1$ from both towers and thus $2k+2$ in total.
\end{proof}

In \cite{blessing2015}, Blessing et al. conjectured that in general this inequality is sharp, i.e. that $\delta_{t+1,r+2}(\mathbb{Z}^2)= \delta_{t,r}(\mathbb{Z}^2)$. However, Drews, Harris, and Randolph in \cite{drews2019}, showed by computing these quantities that, in fact, $\delta_{t+1,r+2}(\mathbb{Z}^2)< \delta_{t,r}(\mathbb{Z}^2)$ for several values of $t$ and $r$. Consequently, they formulated a stronger conjecture on the value of $\delta_{t,r}(\mathbb{Z}^2)$ for $r\leq 10$. We believe the improved bounds suggested in \cite{drews2019} are an artifact of the small values of $t$ used in the simulation run by Drews, Harris, and Randolph, as results for $t\leq 15$ were reported in the paper. We propose the following weakening of the conjecture proposed by Blessing, et al. 
\begin{conjecture}\label{mainconj}
For all $r \geq 2,$ there exists $t_0$  such that for all $t\geq t_0$;
$$\delta_{t+1,r+2}(\mathbb{Z}^2)= \delta_{t,r}(\mathbb{Z}^2).$$
\end{conjecture}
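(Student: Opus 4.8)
The plan is to split the asserted equality into its two inequalities. The inequality $\delta_{t+1,r+2}(\mathbb{Z}^2)\le\delta_{t,r}(\mathbb{Z}^2)$ is the constructive half and is essentially in hand: for $r\le 2$ it is Propositions~\ref{t1bound} and~\ref{t2bound}, and in general it follows by taking a density-optimal $(t,r)$-broadcasting set and checking, exactly as Drews--Harris--Randolph did for $r=3$ in \cite{drews2019}, that raising every tower's strength to $t+1$ yields a $(t+1,r+2)$-broadcast: a vertex close to its tower already receives at least $r+2$ from it alone, while a vertex near the boundary of its cell, having gained $2$ at its own tower, makes up the remaining shortfall from a neighbouring tower that is now within range. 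So the content of Conjecture~\ref{mainconj} is the reverse inequality $\delta_{t+1,r+2}(\mathbb{Z}^2)\ge\delta_{t,r}(\mathbb{Z}^2)$, and my plan is to run the three-part argument behind Theorem~\ref{densityof3broadcast} uniformly in $r$, with an outer induction on $r$ (stepping $r\mapsto r+2$, the odd family based at $r=1$, the even family at $r=2$) that supplies the value of $\delta_{t,r}(\mathbb{Z}^2)$ needed as an input at each step.

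For the step out of a given $r$, write $\sigma_\rho^{(s)}:=\sum_{u:\,|u|_1\le s-1}\min\{\rho,\ s-|u|_1\}$ for the usable transmission of a single strength-$s$ tower against a target $\rho$, so that the average excess per tower of any $(t+1,r+2)$-broadcasting set $\mathcal T$ of density $\delta$ equals $\sigma_{r+2}^{(t+1)}-(r+2)/\delta$. Setting
\[
c_r\ :=\ \sigma_{r+2}^{(t+1)}\ -\ \frac{r+2}{\delta_{t,r}(\mathbb{Z}^2)},
\]
the reverse inequality is exactly the assertion that every $(t+1,r+2)$-broadcasting set carries total excess at least $c_r|\mathcal T|$. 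One first checks, using the inductive value of $\delta_{t,r}(\mathbb{Z}^2)$, that $c_r$ is a positive integer independent of $t$ for $t$ large ($c_1=4$, the case treated by Lemma~\ref{excessconclusion}; for instance $c_3=20$). The heart of the matter is then the analogue of Lemma~\ref{excesslem}: for every tower $T$ there is at least $c_r$ excess inside a window $T+[t-A_r,\,t+A_r]\times[-A_r,A_r]$ (and, if necessary, its reflections), for some $A_r$ depending only on $r$. This would be proved by a finite case analysis on the relative positions of the $O(r)$ towers that deliver the last few units of signal to the vertices at distance about $t$ from $T$, together with a secondary analysis of the towers assisting those --- precisely the structure of Cases~4.1--4.3 in the proof of Lemma~\ref{excesslem}. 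Finally, one assembles these local contributions as in Lemma~\ref{excessconclusion}: the windows around well-separated towers are disjoint, while for clusters of towers lying within a bounded distance of one another the midpoint region of each edge of the proximity graph carries a large, cumulative reservoir of excess, enough to charge $c_r$ to every tower. Counting on the $(2n+1)\times(2n+1)$ box then gives $|\mathcal T|\ge (r+2)(2n+1)^2/(\sigma_{r+2}^{(t+1)}-c_r)$, hence $\delta_{t+1,r+2}(\mathbb{Z}^2)\ge (r+2)/(\sigma_{r+2}^{(t+1)}-c_r)=\delta_{t,r}(\mathbb{Z}^2)$, as required.

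The main obstacle is this local excess lemma. Already at $r=3$ the case analysis is at the boundary of what is tractable by hand, and for $r\ge 4$ the number of configurations of the $O(r)$ ``helper'' towers --- and of their own helpers --- grows quickly, each branch demanding its own delicate signal bookkeeping; extracting a bounded window size $A_r$ and, above all, a uniform threshold $t_0(r)$ from all of this is the crux. That threshold is also precisely where the gap between Conjecture~\ref{mainconj} and the (false) equality conjectured by Blessing et al.\ lives: the lower-density configurations exhibited by Drews--Harris--Randolph are boundary artefacts that cannot persist once $t\ge t_0(r)$, and ruling them out is exactly what the case analysis has to accomplish. A secondary difficulty is that the induction hands over only the \emph{value} of $\delta_{t,r}(\mathbb{Z}^2)$, not a description of its extremal configurations, so the constructive half for general $r$ (and the consistency check that $c_r$ is actually attained) still requires pinning those configurations down --- which is itself open beyond small $r$, the even family resting on the $(t,2)$ optimum as its base case. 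A sensible first target is therefore the concrete cases $r=4$ and $r=5$, where the needed $(t,2)$- and $(t,3)$-densities are available.
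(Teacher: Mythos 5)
The statement you are proving is a \emph{conjecture}: the paper offers no proof of it, only the first instance ($r=1$, Theorem \ref{densityof3broadcast}, and only for $t>17$) together with Lemma \ref{conjbound}, which computes the excess attained by the candidate optimal configuration and explicitly records that the matching lower bound ``remains open.'' Your proposal reproduces that same program --- split the equality into two inequalities, reduce the hard one to the claim that every $(t+1,r+2)$-broadcasting set carries average excess at least $c_r$ per tower, prove a local excess lemma in a bounded window, and assemble as in Lemma \ref{excessconclusion} --- and your bookkeeping of the constants is correct (e.g.\ $c_1=4$ and $c_3=20$ do come out of $\sigma_{r+2}^{(t+1)}-(r+2)/\delta$). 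But it is a program, not a proof: the step you yourself flag as the obstacle, namely the analogue of Lemma \ref{excesslem} for general $r$ with a uniform window size $A_r$ and threshold $t_0(r)$, is exactly the open problem, and nothing in the proposal makes progress on it.

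Two further points need repair even as a program. First, the constructive half is not ``essentially in hand'' by the argument you sketch: raising every tower's strength from $t$ to $t+1$ does \emph{not} in general turn a $(t,r)$-broadcast into a $(t+1,r+2)$-broadcast, because a vertex whose entire signal comes from a single tower at distance exactly $t-r$, with no other tower within distance $t$, gains only one unit. This is precisely why the paper derives such inequalities only from the bases $r=1$ and $r=2$ (Propositions \ref{t1bound} and \ref{t2bound}, whose proofs use the auxiliary vertex $u$, respectively the two-equidistant-towers case, and do not extend verbatim to $r\geq 3$). Within your induction the upper bound does follow from those propositions \emph{plus} the inductive equality at level $r$, but then the even chain requires the exact value of $\delta_{t,2}(\mathbb{Z}^2)$ as its base input, which is not currently known. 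Second, reconcile your target constant with the paper's: your per-tower $c_1=4$ agrees with Lemma \ref{excessconclusion} and with Figure \ref{fig:startingconfiguration}, but the per-tower constant $\frac16 k(k+1)(2k+1)$ of Lemma \ref{conjbound} evaluates to $1$ at $k=1$; the two accountings differ by a factor of $4$ (how the region $S'$ is shared among towers), and your final density count only closes if the constant established in the local excess lemma is exactly the one attained by the extremal configuration.
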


In the hopes of proving this result along the line of the proof of Theorem \ref{densityof3broadcast}, we compute the average amount of excess per tower in an optimally $(t,1)$ broadcasting configuration when viewed as a $(t+k,2k+1)$ broadcasting configuration. The task of showing that one cannot achieve a configuration with a smaller average amount of excess per tower remains open, but a proof along the same lines as Lemma \ref{excesslem} seems reasonable. Our attempts have resulted in impenetrable casework, and more ideas to improve elegance would be needed.

\begin{lemma}\label{conjbound}
Let $t>k$. The average excess per tower in an optimally $(t,1)$ broadcasting configuration when viewed as a $(t+k,2k+1)$ broadcasting configuration is $\frac16 k(k+1)(2k+1)$.
\end{lemma}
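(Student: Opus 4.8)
The plan is to compute the total excess produced by the lattice $\mathcal{T}_0 = \{ma+nb : m,n\in\mathbb{Z}\}$ with $a=(t-1,t-2)$, $b=(t-2,1-t)$ — viewed now as a $(t+k,2k+1)$ broadcasting set — and divide by the number of towers, which is the reciprocal of the density $\delta_{t,1}(\mathbb{Z}^2)=\frac{1}{(t-1)^2}$; i.e. each fundamental domain of $\mathcal{T}_0$ has exactly $(t-1)^2$ vertices. By vertex-transitivity it suffices, as the excerpt notes, to compute the \emph{average attributable excess per tower}, so I would fix the tower $T$ at the origin and sum $excess(v) = signal(v) - (2k+1)$ over one fundamental domain, where $signal(v) = \sum_{u\,:\,d(u,v)\le t+k-1}\min\{2k+1,\;t+k-d(u,v)\}$ and the sum over $u$ ranges over all of $\mathcal{T}_0$.

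The key structural fact I would establish first is that in the minimal $(t,1)$ configuration, the closed balls of radius $t-1$ (in the $\ell_1$ metric) centered at the towers of $\mathcal{T}_0$ \emph{tile} $\mathbb{Z}^2$: this is exactly what it means for $\mathcal{T}_0$ to be a minimal-density $(t,1)$ broadcasting set, and it is visible in Figure~\ref{fig:startingconfiguration} where the dashed/solid boundaries partition the plane. Consequently, for $t>k$, every vertex $v$ lies in a unique such ball, say the one centered at $T$, so $v$ receives signal $t+k-d(T,v) = k + (t-d(T,v)) \ge k+1$ from $T$ alone (here $d(T,v)\le t-1$). To get the remaining signal I would show that the \emph{second-nearest} tower $T'$ satisfies $d(T,v)+d(v,T')\ge t$ by the tiling property applied to a suitable ball-boundary vertex on a geodesic from $T$ through $v$ (this is precisely the trick used in the proof of Proposition~\ref{t1bound}), hence $v$ gets at least $t+k-d(T',v)\ge k - (d(T,v)-1) \cdots$ — more carefully, I would partition the fundamental domain of $T$ by the value $j := d(T,v)\in\{0,1,\dots,t-1\}$ and by which "sector" of the ball $v$ sits in, count how many towers $u$ contribute to $signal(v)$ and with what multiplicity, and show the contribution depends (to leading order, and in fact exactly after the cancellations) only on $j$.

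Concretely I expect the computation to collapse as follows: a vertex at $\ell_1$-distance $j$ from its nearest tower $T$ contributes excess $\sum_{i}\min\{2k+1, \text{(signal from }i\text{-th nearest tower)}\} - (2k+1)$, and after using the tiling to pin down the distances to the relevant nearby towers, the per-vertex excess turns out to be a function $e(j)$ that is $0$ for $j\le t-1-k$ (the vertex is "deep" in $T$'s ball and gets $\ge k+1$ from $T$, $\ge k$ from a single other tower, hitting exactly $2k+1$ with no slack — or rather, gets capped) and grows for $j$ near $t-1$; summing $e(j)$ against the number of domain-vertices at distance $j$ from $T$ (which is $4j$ for $1\le j\le t-1$, modulo boundary-sharing corrections that halve the outermost shell since those vertices are equidistant from two towers) should telescope to $\tfrac16 k(k+1)(2k+1)$. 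I would verify the closed form by recognizing $\tfrac16 k(k+1)(2k+1)=\sum_{i=1}^{k} i^2$ and arranging the sum so that the $k$-th shell of "overlap depth" contributes $i^2$ excess.

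\textbf{Main obstacle.} The hard part will be bookkeeping the geometry of how many towers of $\mathcal{T}_0$ lie within distance $t+k-1$ of a given vertex $v$ and the exact pairwise distances, because once the broadcast radius is inflated from $t-1$ to $t+k-1$ each vertex is covered by several towers (not just two), and the $\min\{2k+1,\cdot\}$ cap interacts with which of those contributions are "saturated." Getting the outermost-shell / equidistant-vertex corrections right — so that the count is genuinely the average over a fundamental domain and not an over- or under-count on the boundary of the tiling — is where the argument is most error-prone; I would handle this by working with the honest fundamental parallelogram spanned by $a$ and $b$, assigning each of its $(t-1)^2$ vertices unambiguously to one tower, rather than with $\ell_1$-balls whose boundaries are shared.
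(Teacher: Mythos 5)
Your overall strategy --- compute the total excess of the explicit optimal lattice configuration over one fundamental domain, and observe that this equals the average excess per tower since fundamental domains are in bijection with towers --- is the same as the paper's. But the step you build the computation on fails. You claim that, after cancellations, the per-vertex excess is a function $e(j)$ of $j=d(T,v)$ alone, so that the answer is a sum of $e(j)$ against shell sizes. This is false: in the tiling of $\mathbb{Z}^2$ by disjoint $\ell_1$-balls of radius $t-1$, a vertex at distance $t-1$ from its (unique) nearest tower may sit near the midpoint of an "edge" of the tiling, where it receives signal from exactly two towers summing to exactly $2k+1$ (excess $0$), or near a "corner" where four balls meet, where it receives up to $4k$ (excess up to $2k-1$). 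So the shell sum cannot close without locating the excess \emph{within} each shell, and that localization is precisely the content of the paper's argument: it takes the square $S$ spanned by four mutually nearest towers $T_1=(t-1,0)$, $T_2=(-1,-(t-1))$, $T_3=(-t,1)$, $T_4=(0,t)$, shows (Claim \ref{outsideofsquare}) that all excess is confined to a $2k\times 2k$ block $S'$ around the deep hole $(-\tfrac12,\tfrac12)$, and evaluates it there directly (in effect, $\mathrm{excess}(x,y)=\max\{0,\,2k+1-|2x+1|-|2y-1|\}$ on $S$). Two further slips: the minimal density is $\delta_{t,1}(\mathbb{Z}^2)=\frac{1}{2t^2-2t+1}$, the number of points in an $\ell_1$-ball of radius $t-1$, not $\frac{1}{(t-1)^2}$ (harmless for the average, but your fundamental domain does not have $(t-1)^2$ vertices); and there is no "halving of the outermost shell," because in the discrete tiling the closed balls of radius $t-1$ are genuinely disjoint, so no vertex is within distance $t-1$ of two towers.

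A sanity check you should have run, and which also exposes a problem with the constant you are aiming for: for $k=1$ the formula gives $\tfrac16\cdot1\cdot2\cdot3=1$, yet Section 2 of this very paper states (and direct computation at $t=5$ confirms) that each tower carries exactly $4$ excess --- the four lattice points nearest the deep hole each have excess $1$. The excess region is a full diamond with four-fold symmetry about $(-\tfrac12,\tfrac12)$, and the sum $\sum_{i=0}^{k-1}(i+1)(2k-2i-1)=\tfrac16k(k+1)(2k+1)$ accounts for only one of its four congruent quadrants; the true per-tower average for this configuration is $\tfrac23 k(k+1)(2k+1)$. So do not steer your bookkeeping toward $\sum_{i=1}^{k} i^2$: a correct computation, by your route or the paper's, must land on four times that.
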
 

\begin{proof}
Consider four towers around the origin at $T_1=(t-1,0), T_2=(-1,-(t-1)), T_3=(-t,1)$ and $T_4=(0,t)$ and call the square formed by these towers $S$. This configuration provides a $(t,1)$- broadcast. To complete the proof, it suffices to show that the starting configuration also provides a $(t+k, 1+2k)$- broadcast.
 
 We shall divide $S$ into two regions. Let $S'$ be the square with corner vertices $(k-1,-(k-1)), (-k, -(k-1)), (-k,k),$ and $(k-1,k)$, along with all points on the boundary, and in the interior of this region. As $t > k$, $S'$ is contained inside $S$, since $k < t$, $-(k-1) > -(t-1)$, $k-1 < t-1$ and $-k > -t$.
\begin{claim}\label{outsideofsquare}
The vertices inside $S$ that have signal at least $r$ and no excess are the vertices that do not lie in $S'$ and are in $S$.
\end{claim}
\begin{proof}
Consider the regions defined by the lines $x+y=k$, $x+y=-k-1$, $x-y=k$ and $x-y=-k-1$.
Note that by symmetry we need only check that there is no excess above the line $x+y=k$.
Above the line $x+y=k$, no vertex receives any signal from $T_2$ and $T_3$. Consider a vertex $(x,y)$ in this region. If this vertex is above $x-y=k$ or below $x-y=-k-1$, it will receive signal from only one tower. This will be signal at least $2k+1$ but will have no excess as it lies in the broadcast zone of exactly one tower.
Otherwise, this vertex will receive signal $t+k-(t-1-x+y)$ from $T_1$ and $t+k-(x+t-y)$ from $T_4$, which amounts to a total signal of $2k+1$.
\end{proof}

In the proof of the next claim, we find that each vertex in $S'$ has excess and calculate how much. This process shows that each vertex in $S'$ has signal greater than $r$. 
\begin{claim}
 The excess of $S'$ is $\frac{ 1}{6} k(k + 1) (2 k + 1)$.
\end{claim}
\begin{proof}
In fact we note that for every $0\leq i\leq k-1$, a vertex on the intersection between $x+y=i$ and $S'$ receives an excess of $2k-2i-1$. We proceed by induction on $i$. For $i=0$, note that $(0,0)$ receives $(t-(t'-1))+(t-t')+(t-(t'+1))+(t-t')=4k$ signal, which corresponds to $2k-1$ excess. For a vertex $v$ with $i\geq 1$, note that at least one of $v-e_1$ and $v-e_2$ was in the intersection between $S'$ and $x+y=i-1$. Fix one of these to be $v'$. Now the distances to three towers increases, while to one tower it decreases. \\
In particular, if $v=v'+e_1$, then $d(v,T_2)=d(v',T_2)-1$, $d(v,T_3)=d(v',T_3)-1$, $d(v,T_4)=d(v',T_4)-1$, and $d(v,T_1)=d(v',T_1)-1$. On the other hand, if $v=v'+e_2$, then $d(v,T_1)=d(v',T_1)-1$, $d(v,T_3)=d(v',T_3)-1$, $d(v,T_4)=d(v',T_4)-1$, and $d(v,T_2)=d(v',T_2)-1$.\\
Either way the signal received by $v$ is 2 less than by $v'$ finishing the induction.

The number of vertices on the intersection between $S'$ and $x+y=i$ is $i+1$, so we find total excess: $\sum_{i=0}^{k-1}(i+1)(2k-2i-1)=\frac{ 1}{6} k(k + 1) (2 k + 1)$
\end{proof} 

Thus, each vertex on the infinite grid with a tiling of this pattern has signal at least $r$. This concludes the proof of Theorem \ref{conjbound}.
\end{proof}

 \section{Proof of Theorem \ref{powerofpath}}
\begin{proof}
We will consider the power of a path, $G=P_n^{(k)}$ on vertex set $\{0,\dots,n-1\}$ with $v_iv_j$ an edge  if and only if $|i-j|\leq k$.
For the lower bound we consider the potentially useful amount of signal transmitted by a tower. Note that from the signal submitted to a vertex at distance at most $t-r$ from a tower, only $r$ can be used to exceed the signal threshold. Hence, the total amount of potentially useful signal transmitted by a tower is at most $(2k(t-r)+1)r+2k((r-1)+(r-2)+\dots+1)=((2t-r-1)k+1)r$. Moreover, as the vertex $v_0$ receives signal at least $r$, there must be a tower at $v_i$ for some $i\leq (t-r)k$. This tower wastes $k((r-1)+(r-2)+\dots+1)=kr(r-1)/2$ of its potentially useful amount of transmitted signal. Similarly, $v_n$ receives signal at least $r$. We may conclude that the total amount of transmitted signal needed is at least $nr+kr(r-1)$. This gives the lower bound $\left \lceil\frac{n+k(r-1)}{(2t-r-1)k+1} \right\rceil$.

For the upper bound consider $\mathcal{T}=\{v_i:0\leq i\leq n-1, i\equiv (t-r)k\mod(2t-r-1)k+1\}$ if $(n-1)\mod (2t-r-1)k+1$ is between $(t-r)k$ and $2(t-r)k+1$. Otherwise, let $\mathcal{T}=\{v_i:0\leq i\leq n-1, i\equiv (t-r)k\mod(2t-r-1)k+1\}\cup \{v_{n-1}\}$.

Note that vertices $v_i$ with $i\leq (t-r)k$ all receive enough signal from the tower at $v_{(t-r)k}$. By construction, the last tower is at distance at most $(t-r)$ away from the vertex $v_{n-1}$, so all the vertices not between two towers receive enough signal.

Now consider a vertex $v_i$ between two towers, say $i=l((2t-r-1)k+1)+(t-r)k+p$ where $0\leq p< (2t-r-1)k+1$ and both $v_{l((2t-r-1)k+1)+(t-r)k}$ and $v_{\min\{n,(l+1)((2t-r-1)k+1)+(t-r)k}\}$ are in $\mathcal{T}$. Then 
\begin{align*}
d(v_i,v_{l((2t-r-1)k+1)+(t-r)k})&+d(v_i,v_{\min\{(l+1)((2t-r-1)k+1)+(t-r)k,n\}})\\
&\leq \left\lceil\frac{p}{k}\right\rceil + \left\lceil \frac{(2t-r-1)k+1-p}{k}\right\rceil\\
&=(2t-r-1)+\left\lceil\frac{p}{k}\right\rceil + \left\lceil \frac{1-p}{k}\right\rceil\\
&\leq (2t-r-1)+1\\
&=2t-r
\end{align*}
Thus, the broadcast received by vertex $v_i$ is 
\begin{align*}
\max\{t-&d(v_i,v_{l((2t-r-1)k+1)+(t-r)k}),0\}+\max\{t-d(v_i,v_{\min\{(l+1)((2t-r-1)k+1)+(t-r)k,n\}}),0\}\\
&\geq 2t-(d(v_i,v_{l((2t-r-1)k+1)+(t-r)k}+d(v_i,v_{\min\{(l+1)((2t-r-1)k+1)+(t-r)k,n\}}))\\
&\geq 2t-(2t-r)=r
\end{align*}
Thence, all vertices receive sufficient signal.

\end{proof}
When $k=1$, we are left with a path, and obtain $\gamma_{t,r} (P_n) = \left\lceil \frac{n+r-1}{2t-r} \right\rceil $, agreeing with the result by Crepeau, et al.

\section{Proof of Theorem \ref{powerofcycle}}
\begin{proof}
If $n\leq 2(t-r)k+1$, then any vertex is at most distance $(t-r)$ from any other vertex, so a tower at any vertex is $(t,r)$-broadcasting. If, on the other hand, $n> 2(t-r)k+1$ we find that for all $0\leq i<n$, $d(v_i, v_{i+(t-r)k+1})=(t-r)+1$. Hence, no one tower can be $(t,r)$-broadcasting. 
For $n\leq (2t-r-1)k+1$, $\mathcal{T}=\{0,\left\lfloor\frac{n}{2}\right\rfloor\}$ is $(t,r)$-broadcasting.

First we will show the upper bound. When $2(t-r)k+1 < n$, consider the set $\mathcal{T}=\{v_i:i\equiv 0 \mod (2t-r-1)k+1\}\cap \{v_0,\dots,v_n\}$. Evidently, $|\mathcal{T}|=\left\lceil \frac{n}{(2t-r-1)k+1}\right\rceil$. Moreover, we will show that these towers are $(t,r)$-broadcasting. Consider vertex $v_i$. Choose $l$ and $p$ such that $p\in\{0,\dots,(2t-r-1)k\}$ and $i=l((2t-r-1)k+1)+p$. Note that the two towers closest to $v_i$ are $v_{l((2t-r-1)k+1)}$ and $v_{\min\{(l+1)((2t-r-1)k+1),n\}}$. We find that the sum of the distance between each tower and $v_i$ is
\begin{align*}
d(v_i,v_{l((2t-r-1)k+1)})+d(v_i,v_{\min\{(l+1)((2t-r-1)k+1),n\}})
&\leq \left\lceil\frac{p}{k}\right\rceil + \left\lceil \frac{(2t-r-1)k+1-p}{k}\right\rceil\\
&=(2t-r-1)+\left\lceil\frac{p}{k}\right\rceil + \left\lceil \frac{1-p}{k}\right\rceil\\
&\leq (2t-r-1)+1\\
&=2t-r
\end{align*}
Thus, the broadcast received by vertex $v_i$ is 
\begin{align*}
\max\{t-&d(v_i,v_{l((2t-r-1)k+1)}),0\}+\max\{t-d(v_i,v_{\min\{(l+1)((2t-r-1)k+1),n\}}),0\}\\
&\geq 2t-(d(v_i,v_{l((2t-r-1)k+1)}+d(v_i,v_{\min\{(l+1)((2t-r-1)k+1),n\}}))\\
&\geq 2t-(2t-r)=r
\end{align*}

Note that from the signal submitted to a vertex at distance at most $t-r$ from a tower, only $r$ is used to exceed the signal threshold. Hence, the total amount of potentially useful signal submitted by a tower is at most $(2k(t-r)+1)r+2k((r-1)+(r-2)+\dots+1)=((2t-r-1)k+1)r$. The total signal needed to saturate all the vertices is at least $nr$. Hence, $\gamma_{t,r}(C_n^{(k)})\geq \left\lceil \frac{nr}{r((2t-r-1)k+1)}\right\rceil = \left\lceil \frac{n}{(2t-r-1)k+1}\right\rceil$.
\end{proof}

\section{Concluding Remarks}

A natural next direction would be to consider $n$-dimensional generalizations. Analogously to the 2 dimensional definitions, let the density of a set $\mathcal{T}\subset\mathbb{Z}^n$ be defined to be $\limsup_{m\to \infty} \frac{|\mathcal{T}\cap [-m,m]^n|}{(2m+1)^n}$ and let $\delta_{t,r}(\mathbb{Z}^n)$ be the minimal density of a $(t,r)$ broadcasting set $\mathcal{T}\subset\mathbb{Z}^n$.

\begin{question}
	Is there a relationship between $\delta_{t,r}(\mathbb{Z}^n)$ and $\delta_{t-1,r-2}(\mathbb{Z}^n)$ for some $t$, and $r$? 
\end{question}

In complete parallel to Propositions \ref{t1bound} and \ref{t2bound}, we have that $\delta_{t+k,1+2k}(\mathbb{Z}^n)\leq \delta_{t,1}(\mathbb{Z}^n)$ and $\delta_{t+k,2+2k}(\mathbb{Z}^n)\leq \delta_{t,2}(\mathbb{Z}^n)$ by an analogous proof. Note that in dimensions $n>2$, unlike in dimensions one and two, $l_1$-balls of constant radius do not partition $\mathbb{Z}^n$, so even the exact value of $\delta_{t,1}(\mathbb{Z}^n)$ can be hard to obtain. In 3 dimensions this amounts to efficiently covering space with octahedrons.
\par

In another direction, the continuous generalization of Conjecture \ref{mainconj} might provide a lot of insight. We say a set of towers $\mathcal{T}\subset\mathbb{R}^2$ is $(t,r)$ broadcasting if all points in points $v\in\mathbb{R}^2$ satisfy that
$$\sum_{T\in\mathcal{T}} \max\{t-d(T,v),0\} \geq r$$
where $d$ is some metric on $\mathbb{R}^2$. It is natural to look for the minimal density $\limsup_{x\to \infty} \frac{\text{card} (\mathcal{T}\cap [-x,x]^2)}{4x^2}$ of a $(t,r)$-broadcasting set. For $d$ the Euclidean $\ell_2$ distance, this problem is intimately related to efficient sphere packing. To stay as close to the discrete context as possible, let $d$ be the $\ell_1$ distance. Let $\delta'_{t,r}(\mathbb{R}^2)$ be the smallest density of a $(t,r)$ broadcasting set in $\mathbb{R}^2$. Note that in this definition being $(t,r)$ broadcasting and being $(1,\frac{r}{t})$ broadcasting are equivalent. In fact for $\alpha>0$, $\delta'_{t,r}(\mathbb{R}^2)=\delta'_{\alpha t,\alpha r}(\mathbb{R}^2)$. Analogously to Conjecture \ref{mainconj}, we believe
\begin{conjecture}\label{contconj}
There exists $ \gamma_0>0$ such that for all $\gamma\leq \gamma_0$,
$$\delta'_{1,\gamma}(\mathbb{R}^2)=\lim_{\epsilon\to 0}\delta'_{1-\gamma/2,\epsilon}(\mathbb{R}^2)=\frac{1}{4(1-\frac{\gamma}{2})^2}$$
\end{conjecture}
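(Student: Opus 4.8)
The conjecture bundles two assertions: the second equality $\lim_{\epsilon\to0}\delta'_{1-\gamma/2,\epsilon}(\mathbb{R}^2)=1/\mathrm{vol}(B_{1-\gamma/2})$, writing $B_\rho$ for the $\ell_1$-ball of radius $\rho$, and then the identity $\delta'_{1,\gamma}(\mathbb{R}^2)=1/\mathrm{vol}(B_{1-\gamma/2})$. The second is routine: a $(1-\gamma/2,\epsilon)$-broadcasting set has every point within $\ell_1$-distance $1-\gamma/2-\epsilon$ of a tower, hence covers $\mathbb{R}^2$ by translates of $B_{1-\gamma/2-\epsilon}$, which forces density at least $1/\mathrm{vol}(B_{1-\gamma/2-\epsilon})$; the lattice tiling of $\mathbb{R}^2$ by copies of $B_{1-\gamma/2-\epsilon}$ attains it, and letting $\epsilon\to0$ gives the claim. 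The easy half of the first equality comes for free too: by the argument of Proposition~\ref{t1bound} — given $v$, choose $T\in\mathcal{T}$ with $d(T,v)<1-\gamma/2$, prolong a geodesic to a point $u$ with $d(T,u)=1-\gamma/2$, choose $T'\ne T$ contributing signal to $u$, and add the strength-$1$ contributions of $T,T'$ at $v$ — any $(1-\gamma/2,\epsilon)$-broadcasting set is $(1,\gamma)$-broadcasting, so $\delta'_{1,\gamma}(\mathbb{R}^2)\le\delta'_{1-\gamma/2,\epsilon}(\mathbb{R}^2)$ for every $\epsilon>0$. Hence the whole content of the conjecture is the reverse inequality $\delta'_{1,\gamma}(\mathbb{R}^2)\ge 1/\mathrm{vol}(B_{1-\gamma/2})$.

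To prove it I would transpose the architecture of the proof of Theorem~\ref{densityof3broadcast} to the continuum. Put $C:=\int_{\mathbb{R}^2}\max\{1-\|v\|_1,0\}\,dv$ (the total signal a strength-$1$ tower emits), and for a $(1,\gamma)$-broadcasting set $\mathcal{T}$ let $\mathrm{excess}(v):=\sum_{T\in\mathcal{T}}\max\{1-d(T,v),0\}-\gamma\ge0$. Integrating over a large box and comparing with the $|\mathcal{T}|\cdot C$ signal emitted gives $\mathrm{density}(\mathcal{T})\cdot(C-E)\ge\gamma$ as soon as the excess can be apportioned at an average of $E$ per tower; since $C-E=\gamma\cdot\mathrm{vol}(B_{1-\gamma/2})$ is precisely the per-tower excess carried by the tiling $\mathcal{T}_0$ (strength-$1$ towers at the centres of a tiling of $\mathbb{R}^2$ by copies of $B_{1-\gamma/2}$), everything reduces to a continuous version of Lemma~\ref{excesslem} — in any $(1,\gamma)$-broadcasting set each tower can be charged at least $E_0:=C-\gamma\,\mathrm{vol}(B_{1-\gamma/2})$ units of excess lying in a bounded neighbourhood of it — together with a continuous version of Lemma~\ref{excessconclusion} to globalize the charging with controlled overlaps (here overlaps should only help, as two nearby towers massively over-supply the region between them).

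The excess lemma is where the difficulty sits, and it must be shown tightly: $E_0$ is the exact tiling value, so unlike in Lemma~\ref{excessconclusion} there is no slack to squander. The obvious charge is insufficient — a tower $T$ already over-supplies its inner ball $\{v:\|v\|_1\le1-\gamma\}$, but $\int_{\|v\|_1\le1-\gamma}(1-\gamma-\|v\|_1)\,dv$ recovers all but a term of order $\gamma^3$ of $E_0$, so the remainder has to be extracted from the thin annulus $\{1-\gamma\le\|v\|_1<1\}$ around $T$ and from the towers that help cover it — exactly the sort of casework that, as observed above, obstructs our approach to Conjecture~\ref{mainconj}. (Tellingly, in one dimension the obvious charge $\int_{|x|\le1-\gamma}(1-\gamma-|x|)\,dx$ is already \emph{exactly} equal to the tiling value $(1-\gamma)^2$, which is why the identity $\delta'_{1,\gamma}(\mathbb{R})=\tfrac1{2-\gamma}$ is elementary for every $\gamma\in(0,2)$; the planar deficit is what makes the annulus analysis unavoidable.) A cleaner route I would try first is linear-programming duality: exhibit a positive measure $\mu$ on $\mathbb{R}^2$, invariant under the symmetry group of $\mathcal{T}_0$ and supported on the no-excess set of $\mathcal{T}_0$, such that $\int_{\mathbb{R}^2}\max\{1-\|p-v\|_1,0\}\,d\mu(v)\le1$ for \emph{every} $p\in\mathbb{R}^2$; weak duality then gives $\delta'_{1,\gamma}(\mathbb{R}^2)\ge\gamma\cdot\mathrm{density}(\mu)$, and complementary slackness with $\mathcal{T}_0$ predicts that the optimal such $\mu$ achieves $\gamma\cdot\mathrm{density}(\mu)=1/\mathrm{vol}(B_{1-\gamma/2})$. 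The genuine obstacle survives the reformulation — one must still verify the capacity inequality at tower positions $p$ off the lattice — but in the continuous setting this is a finite-dimensional optimization in the local picture, free of the parity phenomena of $\mathbb{Z}^2$ and aided by the scaling invariance $\delta'_{t,r}=\delta'_{\alpha t,\alpha r}$, and I expect it to be tractable for $\gamma$ below an explicit $\gamma_0$. The restriction to small $\gamma$ is essential: once $\gamma$ is large the ball of radius $1$ about a point meets several towers of $\mathcal{T}_0$, the bookkeeping of which towers contribute changes, and it is plausible that $\mathcal{T}_0$ is no longer optimal.
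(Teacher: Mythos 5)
This statement is a \emph{conjecture}, and the paper does not prove it: the only justification offered in the text is for the easy parts, namely the right-hand equality (via the explicit lattice $\mathcal{T}_\epsilon$) and the inequality $\delta'_{1,\gamma}(\mathbb{R}^2)\leq \frac{1}{4(1-\gamma/2)^2}$ (via $\mathcal{T}_0$). Those are exactly the parts you establish, by essentially the same means (your covering argument for the lower bound on $\delta'_{1-\gamma/2,\epsilon}$ is a small addition the paper omits, though note that a $(1-\gamma/2,\epsilon)$-broadcasting set need not place every point within distance $1-\gamma/2-\epsilon$ of a tower, since the $\epsilon$ of signal can accumulate from several towers; distance strictly less than $1-\gamma/2$ is what is forced, and that already suffices for the covering bound). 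The substantive content of the conjecture is the reverse inequality $\delta'_{1,\gamma}(\mathbb{R}^2)\geq \lim_\epsilon\delta'_{1-\gamma/2,\epsilon}(\mathbb{R}^2)$, which you correctly isolate but do not prove: both the continuous analogue of Lemma \ref{excesslem} and the LP-duality route are left as programs, with the hard step (extracting the missing order-$\gamma^3$ excess from the annulus, or verifying the capacity inequality off the lattice) explicitly unresolved. So the proposal is an honest and well-calibrated reduction plus a plausible plan, not a proof --- which puts it on the same footing as the paper itself. Your quantitative side remarks (the one-dimensional identity, and the $\gamma^3/6$ deficit of the naive inner-ball charge against the tiling excess $E_0$) check out and are genuinely useful guidance for anyone attempting the conjecture.

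One discrepancy you should not let pass silently: you assert the common value is $1/\mathrm{vol}(B_{1-\gamma/2})=\frac{1}{2(1-\gamma/2)^2}$, whereas the conjecture as stated says $\frac{1}{4(1-\gamma/2)^2}$. These differ by a factor of $2$, and your constant appears to be the correct one: the $\ell_1$-ball of radius $\rho$ in $\mathbb{R}^2$ has area $2\rho^2$, and the lattice generated by $a=(s,s)$ and $b=(s,-s)$ with $s=1-\gamma/2-\epsilon$ has covolume $|\det(a,b)|=2s^2$, hence density $\frac{1}{2s^2}$, not $\frac{1}{4s^2}$ as claimed in the paper's justification of the right-hand equality. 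Either way, flagging and resolving this mismatch is necessary before any proof attempt, since the target constant determines the exact per-tower excess $E_0$ that your charging scheme must achieve with no slack.
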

The right equality follows from the fact that the set $\mathcal{T}_\epsilon=\{ma+nb:m,n\in\mathbb{Z}\}$ with $a=(1-\frac{\gamma}{2}-\epsilon,1-\frac{\gamma}{2}-\epsilon)$ and $b=(1-\frac{\gamma}{2}-\epsilon,\frac{\gamma}{2}+\epsilon-1)$ is $(1-\gamma/2, \epsilon)$ broadcasting and has asymptotic density $\frac{1}{4(1-\frac{\gamma}{2}-\epsilon)^2}$, which tends to $\frac{1}{4(1-\frac{\gamma}{2})^2}$ as $\epsilon\to 0$. Moreover, the set $\mathcal{T}_0$ immediately shows $\delta'_{1,\gamma}\leq \frac{1}{4(1-\frac{\gamma}{2})^2}$.

When viewing the discrete setting as an approximation of the continuous setting, Conjecture \ref{contconj} would indicate that the minimal $t_0$ as a function of $r$ in Conjecture \ref{mainconj} would be at most linear, i.e. $t_0=O(r)$.

\section*{Acknowledgements}
The authors would like to thank their supervisor professor B\'ela Bollob\'as for his continuous support and for comments on an earlier draft of this paper.

\bibliographystyle{abbrv}
\bibliography{references}

\begin{thebibliography}{1}

\bibitem{blessing2015}
D.~Blessing, E.~Insko, K.~Johnson, and C.~Mauretour.
\newblock On $(t,r)$ broadcast domination number of grids.
\newblock {\em Discrete Applied Mathematics}, 187:19--40, 2015.

\bibitem{crepeau2019}
N.~Crepeau, P.~E. Harris, S.~Hays, M.~Loving, J.~Rennie, G.~Rojas~Kirby, and
  A.~Vasquez.
\newblock On $(t,r)$ broadcast domination of certain grid graphs.
\newblock 2019.

\bibitem{drews2019}
B.~F. Drews, P.~E. Harris, and T.~W. Randolph.
\newblock Optimal $(t,r)$ broadcasts on the infinite grid.
\newblock {\em Discrete Applied Mathematics}, 255:183--197, 2019.

\bibitem{harris2018}
P.~E. Harris, D.~K. Luque, C.~Reyes~Flores, and N.~Sepulveda.
\newblock Broadcast domination of the triangular matchstick graphs and the
  triangular lattice.
\newblock 2018.

\bibitem{randolph2018}
T.~W. Randolph.
\newblock Asymptotically optimal bounds for $(t,2)$ broadcast domination on
  finite grids.
\newblock 2018.

\bibitem{shlomi2019}
T.~Shlomi.
\newblock Bounds on $(t,r)$ broadcast domination of $n$-dimensional grids.
\newblock 2019.

\end{thebibliography}

\end{document}